\theoremstyle{definition}
\newtheorem{dfn}{Definition}[section]
\theoremstyle{theorem}
\newtheorem{prp}[dfn]{Proposition}
\newtheorem{lmm}[dfn]{Lemma}
\newtheorem{thm}[dfn]{Theorem}
\newtheorem*{qst}{Question}
\newcounter{mainthms}
\theoremstyle{theorem}
\newtheorem{mainthm}[mainthms]{Theorem}
\numberwithin{equation}{subsection}
\newcommand{\conj}[1]{\overline{#1}}
\newcommand{\wilde}[1]{\widetilde{#1}}
\newcommand{\undl}[1]{\underline{#1}}
\newcommand{\vnorm}[1]{\left\lvert #1 \right\rvert}
\newcommand{\xlra}[1]{\xlongrightarrow{#1}}
\newcommand{\xlla}[1]{\xlongleftarrow{#1}}
\newcommand{\vphi}{\varphi}
\newcommand{\veps}{\varepsilon}
\newcommand{\mc}[1]{\mathcal{#1}}
\newcommand{\mf}[1]{\mathfrak{#1}}
\newcommand{\ms}[1]{\mathscr{#1}}
\newcommand{\mb}[1]{\mathbb{#1}}
\newcommand{\cat}[1]{\mathsf{#1}}
\newcommand{\sC}{\ms{C}}
\newcommand{\Spaces}{\ms{S}}
\newcommand{\sCRW}{\ms{CRW}}
\newcommand{\cMF}{\mc{MF}}
\newcommand{\cO}{\mc{O}}
\newcommand{\cZ}{\mc{Z}}
\newcommand{\cRW}{\mathbf{RW}}
\newcommand{\fe}{\mf{e}}
\newcommand{\bh}{\mathbf{h}}
\newcommand{\bMF}{\mathbf{MF}}
\newcommand{\bT}{\mathrm{T}}
\newcommand{\CC}{\mb{C}}
\newcommand{\KK}{\mb{K}}
\newcommand{\LL}{\mb{L}}
\newcommand{\TT}{\mb{T}}
\newcommand{\ZZ}{\mb{Z}}
\newcommand{\Hom}{\mathrm{Hom}}
\newcommand{\End}{\mathrm{End}}
\newcommand{\Fun}{\mathrm{Fun}}
\newcommand{\Map}{\mathrm{Map}}
\newcommand{\CAlg}{\mathrm{CAlg}}
\DeclareMathOperator{\Spec}{Spec}
\newcommand{\HC}{\mathrm{HC}}
\newcommand{\id}{\mathrm{id}}
\DeclareMathOperator{\im}{im}
\newcommand{\opp}{{\mathrm{op}}}
\newcommand{\rmc}{\mathrm{c}}
\newcommand{\Bord}{\mathbf{Bord}}
\newcommand{\Lag}{\mathrm{Lag}}
\newcommand{\dSt}{\mathrm{d}\ms{S}\mathrm{t}}
\newcommand{\dAff}{\mathrm{d}\ms{A}\mathrm{ff}}
\newcommand{\Mod}{\ms{M}\mathrm{od}}
\newcommand{\sCAlg}{\ms{CA}\mathrm{lg}}
\newcommand{\QCoh}{\ms{QC}\mathrm{oh}}
\newcommand{\PreSymp}{\ms{P}\mathrm{re}\ms{S}\mathrm{ymp}}
\title[Higher categories of push-pull spans, II]{Higher categories of push-pull spans, II:\\Matrix factorizations}
\date{\today}
\author{Lorenzo Riva}
\begin{document}

\maketitle

\begin{abstract}
	This is the second part of a project aimed at formalizing Rozansky-Witten models in the functorial field theory framework. In the first part we constructed a symmetric monoidal $(\infty, 3)$-category $\sCRW$ of commutative Rozansky-Witten models with the goal of approximating the $3$-category of Kapustin and Rozansky. In this paper we extend work of Brunner, Carqueville, Fragkos, and Roggenkamp on the affine Rozansky-Witten models: we exhibit a functor connecting their $2$-category of matrix factorizations with the homotopy $2$-category of $\sCRW$, and calculate the associated TFTs.
\end{abstract}

\setcounter{tocdepth}{2}
\tableofcontents

\section{Introduction} \label{sec:intro}

In \cite{KRS2009} Kapustin, Rozansky, and Saulina studied the Rozansky-Witten field theory, a $3$-dimensional topological sigma model introduced in \cite{RW1997} with target space a holomorphic symplectic manifold, using path integral techniques. Their results later led them to conjecture the existence of a symmetric monoidal $3$-category $\cRW$ formed from these theories by varying the target, i.e. a $3$-category whose objects are holomorphic symplectic manifolds and whose $2$-categories of morphisms are the defects determined by the field theory -- see \cite{KR2010} for extended details and parts of the construction. Motivated by this conjecture and by the ``semi-classical'' results of \cite{CHS2022} on AKSZ models obtained using $\infty$-categorical machinery, in \cite{Riva2024} we constructed a symmetric monoidal $(\infty,3)$-category $\sCRW$  of \emph{commutative Rozansky-Witten models} with the aim of approximating $\cRW$ with the machinery of derived algebraic geometry over a field $\KK$. The construction of $\sCRW$ is general enough that it could be tweaked and applied to other geometric objects -- such as analytic or smooth stacks -- provided that they come with a good theory of symplectic structures and quasicoherent sheaves; different choices of geometric objects might lead to more accurate approximations to the desired $3$-category $\cRW$.

Another source of motivation for our previous work was the paper \cite{BCR2023} of Brunner, Carqueville, and Roggenkamp and its sequel \cite{BCFR2023} with Fragkos. In the first paper the authors describe a concrete model for a $2$-categorical truncation $\bMF$\footnote{In \cite{BCR2023} this $2$-category was simply called $\mc{C}$.} of $\cRW$ spanned by the affine planes, namely those symplectic manifolds of the form $\bT^\ast \CC^n$ for some $n \geq 0$. The $1$-morphisms in $\bMF$ can be thought of as families of Lagrangian submanifolds of $\bT^\ast \CC^n$ where each member of the family is the graph of a polynomial $1$-form on $\CC^n$, and the $2$-morphisms are chain homotopy equivalence classes of matrix factorizations associated to those polynomials. Moreover, they show that $\bMF$ has duals (namely that all objects are dualizable (\cite{BCR2023}), that all $1$-morphisms have adjoints (\cite{BCFR2023})) and explicitly exhibit all of the fully extended oriented field theories with target $\bMF$. 

If $\sCRW$ really is a decent approximation to $\cRW$ then we should expect the behavior of $\bMF$ to carry over to a $2$-categorical truncation of $\sCRW$. In this paper we confirm this prediction:
\begin{mainthm}[\Cref{thm:main}] \label{thm:mainA}
	Fix an algebraically closed field $\KK$ of characteristic $0$. Then there exists a symmetric monoidal $2$-functor
	\begin{equation*}
		\fe : \bMF \to \bh_2 \sCRW
	\end{equation*}
	with target the homotopy $2$-category of $\sCRW$. The image lands in the subcategory of $\bh_2 \sCRW$ spanned by the cotangent stacks of the form $\bT^\ast \KK^{n}$ for $n \geq 0$, and in fact $\fe$ is surjective on those objects.
\end{mainthm}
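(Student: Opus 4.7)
The plan is to construct $\fe$ levelwise -- on objects, $1$-morphisms, and $2$-morphisms of $\bMF$ -- and then verify compatibility with compositions, units, and the symmetric monoidal structures. On objects, I send $\bT^\ast \CC^n \in \bMF$ to the derived cotangent stack $\bT^\ast \KK^n$, regarded as an object of $\sCRW$ via its canonical shifted symplectic structure coming from the AKSZ construction on affine space. Symmetric monoidal compatibility at this level is immediate from the natural equivalence $\bT^\ast \KK^{n+m} \simeq \bT^\ast \KK^n \times \bT^\ast \KK^m$.

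For $1$-morphisms, a $1$-cell in $\bMF$ from $\bT^\ast \CC^n$ to $\bT^\ast \CC^m$ is specified by a polynomial potential $W \in \KK[x_1,\dots,x_n,y_1,\dots,y_m]$, whose differential $dW$ cuts out a graph Lagrangian $L_W \hookrightarrow \bT^\ast \KK^{n+m}$. I send it to the push-pull span
\begin{equation*}
    \bT^\ast \KK^n \xleftarrow{p} L_W \xrightarrow{q} \bT^\ast \KK^m
\end{equation*}
in $\sCRW$, equipped with the canonical Lagrangian structure inherited from the graph construction. Horizontal composition of $1$-cells in $\bMF$ adds potentials (the classical Thom-Sebastiani picture), while composition in $\sCRW$ is fiber product of spans; the two agree because the derived intersection of two graph Lagrangians along a common intermediate $\bT^\ast \KK^k$ is itself a graph, for the sum of the potentials.

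For $2$-morphisms, a $2$-cell $W \Rightarrow W'$ in $\bMF$ is, after fixing conventions, an object of $\mathrm{MF}(W'-W)$. Via the Orlov/Isik correspondence identifying matrix factorizations with singularity categories of derived zero loci, such an object produces a quasi-coherent sheaf on the derived fiber product $L_W \times_{\bT^\ast \KK^{n+m}} L_{W'}$, which is precisely the type of $2$-morphism datum that is built into $\bh_2 \sCRW$ via the push-pull construction of \cite{Riva2024}.

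The main obstacle lies in the compatibility of horizontal composition of $2$-cells: in $\bMF$ this is given by the tensor product of matrix factorizations over an intermediate polynomial ring, whereas in $\bh_2 \sCRW$ it is a push-pull of quasi-coherent sheaves along the derived span of Lagrangian intersections. Matching the two reduces to a categorified Thom-Sebastiani statement, carefully interpreted inside the push-pull framework of \cite{Riva2024}, together with a base-change argument relating the Orlov/Isik equivalence to the $\QCoh$-valued morphism categories of $\sCRW$. Once this categorical compatibility is in hand, the units, interchange law, and symmetric monoidal coherence data follow formally from the naturality of the cotangent construction under products of affine spaces and from the symmetric monoidal structure on $\QCoh$.
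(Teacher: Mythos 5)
Your overall strategy (objects to cotangent stacks, potentials to Lagrangians, matrix factorizations to data on the derived intersection) is in the right spirit, but there are two genuine gaps. First, you have dropped the extra variables that are part of the structure of $\bMF$. A $1$-morphism $\undl{x} \to \undl{y}$ there is a pair $(\undl{a}, V)$ with $V \in \KK[\undl{xya}]$, and horizontal composition concatenates parameter tuples and adjoins the intermediate variables: $(\undl{b},W)\circ(\undl{a},V) = (\undl{ayb}, V+W)$. Correspondingly the paper does not send a $1$-cell to a graph Lagrangian but to the span with apex $\Spec R_{(\undl{a},V)}$, the derived critical locus of $V$ in the $\undl{a}$-directions only, i.e.\ a \emph{family} of graphs parametrized by $\undl{a}$ (\Cref{prp:e-1cells}). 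Your claim that the derived intersection of two graph Lagrangians over an intermediate $\bT^\ast\KK^k$ is again a graph is false: the fiber product imposes $\partial_{y_i}(V+W)=0$ derivedly, so the composite is the derived critical locus in the intermediate variables, not a graph. This is precisely why the parameters $\undl{a}$ exist in $\bMF$ and why the apex must be $\Spec R_{(\undl{a},V)}$; without them your composition-compatibility claim for $1$-cells does not hold.

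Second, on $2$-morphisms your route genuinely differs from the paper's and leaves the key step unproven. The paper does not invoke an Orlov/Isik-type equivalence; it sends a matrix factorization $M$ of $W-V$ to the $\ZZ_2$-graded endomorphism complex $\End(M)$, made into a module over $A_{W-V} = R_{(\undl{a},V)}\otimes_{\KK[\undl{xy},\undl{p_xp_y}]}R_{(\undl{b},W)}$ via the identification $\End(I_{(\undl{a},V)})\simeq R_{(\undl{a},V)}$ (\Cref{lmm:quasi-iso-end}, \Cref{prp:end-is-module}); compatibility with vertical and horizontal composition is then an explicit tensor computation using $\End(M)\simeq M\otimes M^\vee$ (\Cref{prp:functoriality-2cells}). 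Your ``categorified Thom--Sebastiani plus base change'' is exactly the content that must be supplied, and you have only named it as an obstacle. Note also that the paper's assignment $M\mapsto\End(M)$ is possibly non-faithful (this is posed as an open question in the introduction), so it is not the Orlov/Isik functor; if you want to pursue that alternative you would still need to check that it lands in $\QCoh^{\ZZ_2}$ of the derived \emph{critical} locus of $W-V$ (the actual $2$-morphism category of $\bh_2\sCRW$ here) and intertwines tensor product of matrix factorizations with the push-pull composition, none of which is automatic.
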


Since $\bMF$ has duals, the image of $\fe$ lands in the largest subcategory of $\bh_2 \sCRW$ with duals. Early on in the paper we prove that this is all of $\bh_2 \sCRW$:

\begin{mainthm}[\Cref{thm:2-dual-crw}] \label{thm:mainB}
	The symmetric monoidal $2$-category $\bh_2 \sCRW$ has duals for all objects and adjoints for all $1$-morphisms, so in particular every symplectic derived stack $(X, \omega) \in \bh_2 \sCRW$ is fully dualizable. 
\end{mainthm}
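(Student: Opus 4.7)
The strategy is to construct all the required duals and adjoints explicitly from the geometric data, exploiting the symmetry of the Lagrangian condition, and then verify the triangle identities only up to invertible $2$-morphism in $\bh_2 \sCRW$. Throughout, the unit should be kept in mind: in a symmetric monoidal $2$-category, saying it ``has duals'' means every object is dualizable and every $1$-morphism admits both a left and a right adjoint; for a specific object, full dualizability then follows automatically because the evaluation and coevaluation are themselves $1$-morphisms and hence admit adjoints.

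For dualizability of objects I would take the dual of $(X, \omega)$ to be $(X, -\omega)$, with evaluation and coevaluation given by the diagonal Lagrangian correspondence
\begin{equation*}
	(X, -\omega) \times (X, \omega) \xlla{\Delta} X \xlra{} \mathrm{pt}, \qquad \mathrm{pt} \xlla{} X \xlra{\Delta} (X, \omega) \times (X, -\omega),
\end{equation*}
where $\Delta$ is Lagrangian because $\Delta^* (\omega \boxminus \omega) = 0$. The triangle identities reduce, after composing the two diagonals, to comparing a derived self-intersection $X \times_{X \times X} X$ with $X$ itself along $\id_X$; this is a standard identity of correspondences that holds up to canonical invertible $2$-morphism once the composition law of $\sCRW$ (a derived fiber product of Lagrangians, as set up in part I) is unwound.

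For adjoints of $1$-morphisms, the adjoint of a Lagrangian correspondence
\begin{equation*}
	(X, \omega_X) \xlla{f} L \xlra{g} (Y, \omega_Y)
\end{equation*}
is the correspondence $(Y, \omega_Y) \xlla{g} L \xlra{f} (X, \omega_X)$, which is automatically Lagrangian once one flips the sign conventions because the Lagrangian condition $f^* \omega_X = g^* \omega_Y$ is symmetric in $f$ and $g$. The unit and counit are then supplied by the ``doubling'' correspondences $L \to L \times_Y L$ and $L \times_X L \to L$ built from the relevant diagonals, each equipped with its natural Lagrangian structure; the triangle identities again collapse to manipulations of derived fiber products. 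This construction is entirely parallel to the classical treatment of spans (see, e.g., Haugseng's work on iterated spans) with an extra symplectic decoration to carry along.

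The main obstacle is bookkeeping rather than ideas: the Lagrangian, isotropic, and null-homotopy data attached to each correspondence in $\sCRW$ form a highly structured object, and one must verify that the ``flip'' and ``doubling'' operations described above really produce $1$- and $2$-morphisms in $\sCRW$, not merely at the underlying span level. Concretely I would check (i) that the Lagrangian null-homotopies behave well under swapping source and target (up to the sign flip of $\omega$), (ii) that the composites appearing in the triangle identities compute, in the homotopy category, to identity $1$-morphisms, and (iii) that the comparison $2$-cells are invertible. Once these are in place, Theorem~\ref{thm:mainB} follows because dualizability of objects plus adjunctions for every $1$-morphism is precisely the definition of fully dualizable in a symmetric monoidal $2$-category.
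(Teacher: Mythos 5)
Your duality data coincides exactly with the paper's: the dual of $(X,\omega)$ is $(X,-\omega)$ with the diagonal as (co)evaluation, and the adjoint of a Lagrangian span $X \xlla{f} L \xlra{g} Y$ is the reversed span. Where your proposal diverges is in how the axioms are verified, and there it has a genuine gap. In $\sCRW$ the $2$-morphisms between parallel spans $L$ and $L'$ are not ``doubling correspondences'' carrying Lagrangian structures; they are objects of $\QCoh^{\ZZ_2}(L\times_{X\times Y}L')$. The unit and counit you need are the pushforward $i_\ast\cO_L$ of the structure sheaf along the diagonal $i\colon L\to L\times_{X\times Y}L$, and the triangle identities are equalities of such sheaves computed via the push-pull composition of $2$-morphisms --- not identifications of derived fiber products of stacks. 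Saying the triangle identities ``collapse to manipulations of derived fiber products'' therefore skips the only nontrivial check, namely that the composite sheaf $(\veps\cdot\id_f)\circ(\id_f\cdot\eta)$ is equivalent to $i_\ast\cO_L$. Note also that for an adjunction between $1$-morphisms in a $2$-category the triangle identities must hold as honest equalities of $2$-cells; the ``up to invertible $2$-morphism'' weakening is available only for the object-level duality.

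The paper avoids both issues by never writing down units, counits, or triangle identities. Object dualizability is imported from Haugseng's $(\infty,1)$-category $\Lag^0_{(\infty,1)}$ (Proposition 14.16 there) along a symmetric monoidal functor $\bh_1\Lag^0_{(\infty,1)}\to\bh_1\sCRW$ that is the identity on objects, using that symmetric monoidal functors preserve duality data. The adjunction $f\dashv f^\dagger$ is established via the mapping-space criterion $\Map_{\sCRW}(f\circ g,h)\simeq\Map_{\sCRW}(g,f^\dagger\circ h)$, which follows because $(K\times_X L)\times_{W\times Y}M$ and $K\times_{W\times X}(L\times_Y M)$ are both canonically the limit of one and the same diagram; ambidexterity then comes from the symmetry $\QCoh^{\ZZ_2}(L\cap L')\simeq\QCoh^{\ZZ_2}(L'\cap L)$. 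If you want to salvage your direct route, you must carry out the sheaf-level computation of the triangle identities; otherwise the mapping-space argument is the cleaner way to package it.
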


We can also calculate the topological field theories associated to these fully dualizable objects. The cobordism hypothesis tells us that symmetric monoidal functors
\begin{equation*}
	\cZ : \mathrm{Bord}_{0,1,2}^{\mathrm{or}} \to \bh_2 \sCRW
\end{equation*}
from the oriented cobordism $2$-category are classified by pairs $((X, \omega), \tau)$, where $\cZ(\cat{pt}^+) = (X, \omega) \in \bh_2 \sCRW$ is a fully dualizable object and $\tau : S_{(X, \omega)} \simeq \id_{(X, \omega)}$ is a trivialization of the Serre automorpshim $S_{(X, \omega)} : (X, \omega) \to (X, \omega)$. In our case there is a canonical trivialization $S_{(X, \omega)} \simeq \id_{(X, \omega)}$ (\Cref{prp:serre-trivial}), and we can use that to compute some field theories:

\begin{mainthm} \label{thm:mainC}
	Let $\cZ_A$ be the $2$-dimensional oriented topological field theory valued in $\bh_2 \sCRW$ classified by an affine symplectic derived stack $(X = \Spec A, \omega)$ with the canonical trivialization of its Serre automorphism. If for any commutative differential graded algebra $B$ we let $B^e = B \otimes B$ and $\HC(B) = B \otimes_{B^e} B$, then there are equivalences
	\begin{align*}
		\cZ_A(S^1) & \simeq \Spec \HC(A), \\
		\cZ_A(\Sigma_g) & \simeq A \otimes_{\HC(A)} (A \otimes_{A^e} \HC(\HC(A)) \otimes_{A^e} A)^{\otimes_{\HC(A)} g} \otimes_{\HC(A)} A,
	\end{align*}
	where the first value is considered as a Lagrangian morphism from the trivial stack $\ast$ to itself and the second value is considered as a $\ZZ_2$-graded vector space.
\end{mainthm}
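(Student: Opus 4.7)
The plan is to compute $\cZ_A(S^1)$ and $\cZ_A(\Sigma_g)$ by cutting the source manifolds into elementary cobordisms, evaluating $\cZ_A$ on each piece via the cobordism hypothesis together with the construction of $\sCRW$, and then composing the results using the push-pull (derived fibre product) formula that governs morphism composition in $\bh_2 \sCRW$.

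For the circle, write $S^1$ as the composition of a cup $\emptyset \to S^0$ and a cap $S^0 \to \emptyset$. Under $\cZ_A$ these become the coevaluation and evaluation of $(\Spec A, \omega)$ against its dual, which in $\sCRW$ (with the canonical Serre trivialization of \Cref{prp:serre-trivial}) are both implemented by the diagonal Lagrangian $\Spec A \to \Spec A \times \Spec A$. Composing via the push-pull formula produces the derived self-intersection
\begin{equation*}
	\Spec A \times_{\Spec A \times \Spec A} \Spec A \cong \Spec(A \otimes_{A^e} A) = \Spec \HC(A),
\end{equation*}
viewed as a Lagrangian correspondence from $\ast$ to $\ast$, as claimed.

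For $\Sigma_g$, I would use the handle decomposition $\Sigma_g = \mathrm{cup} \circ H^{\circ g} \circ \mathrm{cap}$, where $\mathrm{cap}$ is a disk viewed as a $2$-morphism $\id_\ast \Rightarrow \cZ_A(S^1)$, $\mathrm{cup}$ is its reverse, and each handle $H : S^1 \to S^1$ is a pair of pants followed by its reverse. Applying $\cZ_A$: the cap and cup are implemented in the push-pull model by $A$ considered as an $\HC(A)$-module via the canonical algebra map $\HC(A) \to A$, accounting for the two outer $A$ factors tensored over $\HC(A)$. The handle $H$ becomes an endomorphism of $\cZ_A(S^1) = \Spec \HC(A)$ computed by iterating the loop-space construction once more: the relevant derived fibre product yields $\Spec \HC(\HC(A))$ equipped with an $A^e$-bimodule structure coming from the two legs of the pair of pants, producing the middle factor $A \otimes_{A^e} \HC(\HC(A)) \otimes_{A^e} A$. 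Composing $g$ such handles corresponds to a $g$-fold tensor product over $\HC(A)$.

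The principal obstacle is carrying out these derived fibre products carefully inside the push-pull model of $\sCRW$ and confirming that each elementary cobordism is sent to the expected span. In particular one must (a) verify that the pair of pants yields the expected multiplication/comultiplication on the loop stack and that one further iteration produces $\Spec \HC(\HC(A))$, and (b) check that the canonical Serre trivialization does not introduce extra shifts or sign twists in the iterated composition. Once these identifications are in place, the formula for $\cZ_A(\Sigma_g)$ follows by a direct iteration of the $S^1$ computation.
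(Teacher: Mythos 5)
Your proposal follows essentially the same route as the paper: $S^1$ is evaluated as the derived self-intersection of the diagonal, giving $\Spec\HC(A)$, and $\Sigma_g$ is decomposed into a cap, $g$ genus-adding handles, and a cup, with the handle contributing $A \otimes_{A^e} \HC(\HC(A)) \otimes_{A^e} A$ and the caps contributing $A$ as an $\HC(A)$-module via the fold map. The verifications you flag as remaining obstacles are handled in the paper by observing that all four elementary $2$-bordisms are (co)units of the adjunctions $\cZ_A(\cat{ev}) \dashv \cZ_A(\cat{coev})$ and $\cZ_A(\cat{coev}) \dashv \cZ_A(\cat{ev})$, hence are all given by the single module $A$ over $\HC(A)$, after which the composites are direct tensor-product computations.
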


The formulas above are consistent with the expectations in the original paper \cite{RW1997} of Rozansky and Witten, where the Rozansky-Witten sigma model was introduced. The case of $\bT^\ast \KK^n$ can be recovered by setting $A = \KK[x_1, \dotsc, x_n, p_{x_1}, \dotsc, p_{x_n}]$. It can be shown that no $\bT^\ast \KK^{n}$ is $3$-dualizable when $n > 0$, so the $2$-dimensional field theories of \Cref{thm:mainC} cannot be upgraded to fully extended $3$-dimensional field theories valued in $\sCRW$ -- see \Cref{prp:no-3-tft}.

Another consequence of the construction of $\fe$ is that the association of $2$-morphisms might not be faithful. The main problem is the failure of reconstructing a matrix factorization from its cochain complex of endomorphisms, thought of as a dg-module over a certain commutative dg-algebra. This is an interesting algebraic problem on its own for which we have no non-trivial guess as to the answer, so we pose it as a question for future consideration:
\begin{qst}
	Let $V \in \KK[\undl{x}]$ be a polynomial and let $M, N$ be two matrix factorizations of $V$. By \Cref{prp:end-is-module} the two cochain complexes $\End(M)$ and $\End(N)$ carry a canonical structure of a dg-module over a commutative differential graded algebra $A_{V}$. If $\End(M) \simeq \End(N)$ as $A_{V}$-modules, what is the relationship between $M$ and $N$?
\end{qst}

\subsubsection*{Outline and notation}

Here is an outline of the paper. In \Cref{sec:crw} we review the results of \cite{Riva2024}, some derived algebra, and we prove \Cref{thm:mainB} by standard $2$-categorical arguments applied to $\bh_2 \sCRW$ and \Cref{thm:mainC} by explicitly computing the Serre automorphism and the values of our field theories. In \Cref{sec:matrix-fact} we describe the $2$-category $\bMF$ and prove the main computational tool needed in the following section, which identifies the endomorphism algebra of the unit matrix factorization of a polynomial $V$ with the algebra of functions on the derived critical locus of the partial derivatives of $V$. Then in \Cref{sec:functor} we prove \Cref{thm:mainA} by explicitly constructing the functor. 

We use $\Mod_\KK$ to denote the $\infty$-category of cochain complexes $X$ over $\KK$, also called \emph{derived $\KK$-modules} or \emph{dg-modules} for short. If $A \in \sCAlg_\KK = \CAlg(\Mod_\KK)$ is a commutative algebra in $\Mod_\KK$, also called a \emph{commutative dg-algebra} or \emph{cdga}\footnote{Note that these are different from the \emph{curved} dgas used in \cite{KR2008}.} for short, then $\Mod_A$ denotes the $\infty$-category of modules over $A$, i.e. dg-modules $M$ equipped with an action map $A \otimes_\KK M \to M$ satisfying associativity and unitality axioms up to cohomology. We can treat any cochain complex $X$ as $\ZZ_2$-graded by summing over the even and odd degrees:
\begin{equation*}
	X \leadsto \conj{X}, \quad \conj{X}_0 = \bigoplus_{\text{even } i} X_i, \quad \conj{X}_1 = \bigoplus_{\text{odd } i} X_i.
\end{equation*}
We tend to make no notational distinction between $X$ and $\conj{X}$ and instead inform the reader when to treat an object as $\ZZ$-graded or $\ZZ_2$-graded. Then we can talk about $\sCAlg_\KK^{\ZZ_2}$, the $\infty$-category of $\ZZ_2$-graded cdgas, and $\Mod_A^{\ZZ_2}$, the $\infty$-category of $\ZZ_2$-graded derived $A$-modules. The same considerations hold for $\QCoh^{\ZZ_2}(X)$, the $\infty$-category of quasicoherent sheaves of $\ZZ_2$-graded modules over a derived stack $X$. Since everything is done in an $\infty$-categorical context we will often drop the adjective ``derived'' and just speak of $\KK$-modules, $A$-modules, and stacks.

We will try to be consistent with the notation used in \cite{Riva2024}. We use $\bh_k \sC$ to denote a homotopy $k$-category of an $(\infty,n)$-category $\sC$. We only consider the cases $k = 1$ and $k = 2$ in this paper; the first corresponds to the classical homotopy category, and the second has been worked out for complete $2$-fold Segal spaces (and hence for $n$-fold Segal spaces, $n \geq 2$) by Rom\"o in \cite{Romo2023}, and that's enough for us since $\sCRW$ was explicitly constructed as a Segal space. It can be shown that $\mathbf{h}_2 \mathscr{C}$ inherits a symmetric monoidal structure if $\mathscr{C}$ has one. The word ``equivalence'' will denote an equivalence in the appropriate $\infty$-category -- most of the time this will be a quasi-isomorphism of cochain complexes or of dg-modules over some cdga, both of which become isomorphisms in the corresponding homotopy category. We use $\circ$ for composition along a codimension $1$ boundary (e.g. composition of $1$-morphisms and vertical composition of $2$-morphisms) and $\cdot$ for composition along a codimension $2$-boundary (e.g. horizontal composition of $2$-morphisms). Algebras, commutative algebras, and modules are used in the appropriate homotopical/$\infty$-categorical sense -- see \cite{HA2017} for a comprehensive survey.

Finally, we would like to thank Nils Carqueville, Chris Schommer-Pries, and Stephan Stolz for enlightening discussions on this topic, for going through earlier versions of this paper, and for providing many helpful suggestions. We are grateful to the referees for suggesting changes in various parts of the document which greatly improved both the narrative and the overall clarity of the paper. 

\section{Brief review of $\sCRW$ and dualizability results} \label{sec:crw}

Throughout this paper we assume that $\KK$ is an algebraically closed field of characteristic $0$. Geometrically-minded readers might find some comfort in taking $\KK = \CC$, but they should be warned that the analytical properties of $\CC$ do not carry over to this context -- it is all and only algebra from now on. In this section we will review the main results of the previous paper, prove that $\bh_2 \sCRW$ has duals, and then define the symplectic derived stacks which we will need in the next section.

\subsection{The basics}

Recall Theorem B of \cite{Riva2024}, from which we know that there exists a symmetric monoidal $(\infty,3)$-category $\sCRW$ with the following properties:
\begin{enumerate}
	\item its objects are symplectic derived stacks $(X, \omega_X)$ over $\KK$ (see \cite{PTVV2013}, for example);
	\item its $1$-morphisms $X \to Y$ are Lagrangian spans $X \leftarrow L \to Y$ (see \cite{Calaque2015}, for example), which compose via (derived) pullbacks;
	\item the $2$-morphisms $L \Rightarrow L'$ form the $\infty$-category $\QCoh^{\ZZ_2}(L \cap L')$ of $\ZZ_2$-graded quasi-coherent sheaves on the derived intersection $L \cap L' := L \times_{X \times Y} L'$; these compose via a push-pull formula (see \cite[Section 1.2.1]{Riva2024});
	\item the symmetric monoidal product is given by
	\begin{equation*}
		(X, \omega_X) \odot (Y, \omega_Y) = (X \times Y, \pi_X^\ast \omega_X + \pi_Y^\ast \omega_Y)
	\end{equation*}
	where $\pi_X$ and $\pi_Y$ are the canonical projections from $X \times Y$.
\end{enumerate}

The structure of the $2$- and $3$-morphisms, and especially their composition, is complicated. Fortunately, the bulk of this paper will only require us to work with a concrete class of derived stacks, namely those which are \emph{affine}. An arbitrary derived stack over $\KK$ is a sheaf of spaces on $\sCAlg^\rmc_{\KK}$, the $\infty$-category of connective (i.e. with vanishing positive cohomology) cdgas over $\KK$, satisfying descent for \'etale morphisms. The affine derived stacks are those which are completely determined by a single cdga: there is a fully faithful functor (the Yoneda embedding)
\begin{equation*}
	\Spec : (\sCAlg_\KK^{\rmc})^\opp =: \dAff \to \dSt_\KK
\end{equation*}
and we define the subcategory of affine stacks
\begin{equation*}
	\dAff_\KK := \im \Spec \simeq (\sCAlg_\KK^{\rmc})^\opp
\end{equation*}
to be the essential image of $\Spec$. In particular we have
\begin{equation*}
	\Map_{\dSt_\KK}(\Spec A, \Spec B) \simeq \Map_{\sCAlg_\KK^{\rmc}}(B, A)
\end{equation*}
for any $A, B \in \sCAlg_\KK^\rmc$. The advantage of working with affine stacks is that maps of cdgas can be constructed directly if one has an explicit model for them, and this will be essential to us in \Cref{sec:bMF} and \Cref{sec:functor}. Note also that all of the examples of cdgas in the paper are obtained from polynomial algebras or exterior algebras with easily defined concrete differentials, so that maps between them are simply polynomial maps in a finite number of variables.

\subsection{$2$-dualizability in $\sCRW$}

In this subsection we will prove a dualizability result which will help us calculate some field theories later on.

\begin{thm} \label{thm:2-dual-crw}
	Every object $X$ of $\sCRW$ is $2$-dualizable: there is a dual $X^\diamond$ with (co)evaluation maps $\eta : \ast \to X \odot X^\diamond$ and $\veps : X^\diamond \odot X \to \ast$, and $\eta$ and $\veps$ are part of an infinite chain of left and right adjoints:
	\begin{gather*}
		\dotsb \dashv \eta^{LL} \dashv \eta^L \dashv \eta \dashv \eta^R \dashv \eta^{RR} \dashv \dotsb, \\
		\dotsb \dashv \veps^{LL} \dashv \veps^L \dashv \veps \dashv \veps^R \dashv \veps^{RR} \dashv \dotsb.
	\end{gather*}
\end{thm}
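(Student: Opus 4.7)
The plan is to exhibit $X^\diamond := (X, -\omega_X)$ as the monoidal dual of $(X, \omega_X)$, construct the (co)evaluations from the diagonal, and then invoke the standard fact that in a $2$-category of spans every $1$-morphism is both left and right adjoint to the reversed span. First I would verify that the diagonal $\Delta : X \to X \times X$ carries a canonical Lagrangian structure relative to the form on $X \odot X^\diamond = (X \times X, \pi_1^\ast \omega_X - \pi_2^\ast \omega_X)$, using the fact that $\Delta^\ast(\pi_1^\ast \omega_X - \pi_2^\ast \omega_X) = 0$; the full Lagrangian data (a homotopy, not merely vanishing) comes from the construction of diagonal Lagrangians in \cite{Calaque2015}. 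Then I set $\eta := \bigl(\ast \leftarrow X \xrightarrow{\Delta} X \times X\bigr)$ and $\veps := \bigl(X \times X \xleftarrow{\Delta} X \to \ast\bigr)$, modulo the symmetric monoidal braiding swapping $X \odot X^\diamond$ and $X^\diamond \odot X$, and verify the snake identities, which reduce to computing the derived self-intersection $X \times_{X \times X} X \simeq X$ and recovering the identity span in $\bh_2 \sCRW$.

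Next, to produce the infinite chain of adjoints, I would use the general principle that in a $2$-category of spans every $1$-morphism $S = (X \xleftarrow{f} L \xrightarrow{g} Y)$ admits both a left and a right adjoint, each given by the reversed span $S^\vee := (Y \xleftarrow{g} L \xrightarrow{f} X)$. The units and counits are supplied by the diagonal $2$-morphisms $L \to L \times_X L$ and $L \to L \times_Y L$, viewed as objects in $\QCoh^{\ZZ_2}$ of the appropriate derived intersection. Since the Lagrangian condition on a span is symmetric in the two endpoints up to negating one of the forms, $S^\vee$ inherits a canonical Lagrangian structure whenever $S$ does, and the argument carries over to $\bh_2 \sCRW$. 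Applied to $\eta$ and $\veps$ this furnishes adjoints on both sides; since $(S^\vee)^\vee = S$, iterating yields the desired periodic chain $\dotsb \dashv \eta \dashv \eta^\vee \dashv \eta \dashv \eta^\vee \dashv \dotsb$, and analogously for $\veps$, so that $\eta^{LL} = \eta = \eta^{RR}$ and similarly all higher iterates are accounted for.

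The main technical obstacle is verifying the triangle identities in the homotopy $2$-category: the composites of units and counits are formed via the push-pull formula on $2$-morphisms, and one must show that the result equals the identity up to an invertible $2$-cell. This amounts to a projection-formula and base-change computation, feeding the structure sheaves of the various diagonals through the integral transforms that define $2$-morphism composition in $\sCRW$. Working in $\bh_2 \sCRW$ rather than in the ambient $(\infty,3)$-category substantially simplifies matters, because we need these identities only up to $2$-isomorphism, not coherently at higher levels. A secondary concern is that the Lagrangian structure data (the homotopy trivializing the pullback of the ambient form on $L$) must transport through the adjunction constructions, but this is encoded in the definition of composition of Lagrangian spans in $\sCRW$ and so follows once the underlying span-theoretic adjunctions are in place.
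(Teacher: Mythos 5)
Your overall strategy matches the paper's: take $X^\diamond=(X,-\omega_X)$ with (co)evaluations the diagonal span, and get the infinite adjoint chain from the fact that every Lagrangian span has the reversed span as both its left and its right adjoint, so the chain is $2$-periodic. Two points of comparison. For dualizability of objects, you verify the snake identities directly, whereas the paper transports dualizability along the symmetric monoidal functor $\bh_1\Lag^0_{(\infty,1)}\to\bh_1\sCRW$ and cites Haugseng's result that every object of $\Lag^0_{(\infty,1)}$ is dualizable; your direct route is fine in principle, but note that it is exactly the computation of \cite[Lemma 12.3]{Haugseng2018} that the paper reuses for the Serre automorphism. For the adjunctions, you propose checking the triangle identities by a projection-formula/base-change computation on the push-pull composition of $2$-morphisms; the paper instead exhibits a natural equivalence $\Map(f\circ g,h)\simeq\Map(g,f^\dagger\circ h)$ coming from the observation that $(K\times_X L)\times_{W\times Y}M$ and $K\times_{W\times X}(L\times_Y M)$ are both the limit of the same zig-zag of spans, which gives the adjunction without ever touching the integral-transform formulas. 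Your route is workable but substantially more laborious, and the left/right ambidexterity falls out of the paper's argument for free via $\QCoh^{\ZZ_2}(L\cap L')\simeq\QCoh^{\ZZ_2}(L'\cap L)$.

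One concrete error: you claim the snake identity ``reduces to computing the derived self-intersection $X\times_{X\times X}X\simeq X$.'' That equivalence is false in general: $X\times_{X\times X}X$ is the derived loop space of $X$ (in the affine case $\Spec\HC(A)$, precisely the paper's value of $\cZ_A(S^1)=\veps\circ\eta$), not $X$. The composite relevant to the snake identity is $(\id_X\odot\veps)\circ(\eta\odot\id_X)$, whose apex is the limit of a zig-zag of the form $X\xleftarrow{}X\times X\to X\times X\times X\leftarrow X\times X\to X$, and it is \emph{this} limit that is canonically identified with $X$. You appear to have conflated $\veps\circ\eta$ with the snake composite; the intended verification goes through once the correct fiber product is used, but as written the step would fail.
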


It will be enough to prove the existence of the dual $X^\diamond$ (together with $\eta$ and $\veps$) and then prove that all $1$-morphisms in $\sCRW$ have both left and right adjoints. We start with duals:

\begin{prp} \label{prp:duals}
	The dual $(X, \omega)^\diamond$ of a derived symplectic stack $(X, \omega)$ is $(X, -\omega)$, with (co)evaluation maps given by the diagonal $\Delta_X$ read as a Lagrangian span between $\ast$ and $X \times X$.
\end{prp}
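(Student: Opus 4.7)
The plan is to verify dualizability by direct construction of the duality data and then check the snake identities. First, $(X, -\omega)$ is manifestly symplectic since the closedness and non-degeneracy conditions are invariant under $\omega \mapsto -\omega$. Second, I would verify that $\Delta_X : X \to X \times X$ lifts to Lagrangian spans
\begin{equation*}
	\eta : \ast \to (X, \omega) \odot (X, -\omega), \qquad \veps : (X, -\omega) \odot (X, \omega) \to \ast,
\end{equation*}
both with underlying map $\Delta_X$. The isotropic structure in either case is the canonical nullhomotopy $\Delta_X^\ast(\pi_1^\ast \omega - \pi_2^\ast \omega) \simeq \omega - \omega \simeq 0$; the non-degeneracy condition amounts to the standard tangent-complex calculation identifying the conormal of $\Delta_X$ with the shifted tangent of $X$ via the symplectic pairing. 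This is the classical statement that the graph of a symplectomorphism is Lagrangian, specialized to $\id_X$.

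Third, I would verify the snake identity $(\id_X \odot \veps) \circ (\eta \odot \id_X) \simeq \id_X$. Unwinding the definitions, the left-hand side is the span $X \leftarrow P \to X$ with $P$ the derived pullback
\begin{equation*}
	P = (X \times X) \times^h_{X \times X \times X} (X \times X)
\end{equation*}
along the maps $(a, b) \mapsto (a, a, b)$ and $(c, d) \mapsto (c, d, d)$, the structure maps coming from the projections $\pi_2$ and $\pi_1$ respectively. There is a canonical map $X \to P$ through the triple diagonal, and I would show this is an equivalence by comparing tangent complexes: the derived tangent fibre of $P$ at a diagonal point computes to $T_x X$ concentrated in degree $0$, matching $T_x X$ on the nose, which combined with bijectivity on points identifies $X \simeq P$. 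Once the pullback is so identified, the two structure maps of the composite span both become $\id_X$, and the invertible $2$-morphism filling the snake identity can be taken to be the identity on $\cO_X \in \QCoh^{\ZZ_2}(X)$. The second snake identity is symmetric.

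The main obstacle is the coherence of this derived pullback calculation: one has to confirm that $P$ carries no hidden derived nilpotent structure (which the tangent complex comparison does rule out, at least when $X$ is geometrically well-behaved) and that the isotropic / Lagrangian data for $\eta$ and $\veps$ compose coherently across this identification. The classical content of the statement -- that the diagonal furnishes a duality pairing in a category of correspondences -- is well-known; the substance of the proof in this derived, higher-categorical setting is ensuring that the symplectic data threads consistently through the composition of spans in $\bh_2 \sCRW$ as described by Rom\"o's model for the homotopy $2$-category.
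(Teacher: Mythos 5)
Your construction is correct in outline, but it takes a genuinely different route from the paper. The paper does not verify the snake identities at all: it constructs a symmetric monoidal functor $\bh_1 \Lag^0_{(\infty,1)} \to \bh_1 \sCRW$ from Haugseng's homotopy category of Lagrangian correspondences (the only thing to check being that the functor is well defined, since the $2$-morphisms of $\sCRW$ induce a coarser, Morita-type equivalence relation on Lagrangian spans), and then transports the dualizability data of \cite[Proposition 14.16]{Haugseng2018} along it, using that dualizability is detected in the homotopy category. Your approach instead re-proves Haugseng's statement by hand inside $\sCRW$: you exhibit $\eta$ and $\veps$ as the diagonal with its canonical isotropic structure and compute the derived pullback $P = (X\times X)\times^h_{X\times X\times X}(X\times X)$ directly. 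This is more self-contained and makes the duality data explicit, at the cost of having to carry the Lagrangian structures through the composition yourself; the paper's argument is shorter but hides the triangle identities in the citation. One step of yours should be tightened: for general derived stacks, ``isomorphism on tangent complexes at points plus bijectivity on points'' is not a complete criterion for the canonical map $X \to P$ to be an equivalence. The cleaner argument --- the one the paper uses in the analogous computation for the Serre automorphism (\Cref{prp:serre-trivial}, following \cite[Lemma 12.3]{Haugseng2018}) --- is that $X$ and $P$ both corepresent the limit of the same diagram of stacks, so they are canonically identified and the two structure maps become $\id_X$; in the affine case this is a direct Tor-independence computation. Also, the invertible $2$-morphism filling the triangle identity should be named as $i_\ast \cO_X \in \QCoh^{\ZZ_2}(X\times_{X\times X}X)$, the unit for vertical composition, rather than $\cO_X \in \QCoh^{\ZZ_2}(X)$.
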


\begin{proof}
	By construction, the symmetric monoidal homotopy $1$-category $\bh_1 \sCRW$ admits a symmetric monoidal functor from $\bh_1 \Lag_{(\infty,1)}^0$, the homotopy $1$-category of the $(\infty,1)$-category of ($0$-shifted) symplectic derived stacks and Lagrangian correspondences defined in \cite[Section 14]{Haugseng2018}. This functor is the identity on objects and sends equivalence classes of Lagrangian spans to ``quasicoherent Morita classes'' of Lagrangian spans: two Lagrangian spans $L, M$ between $X$ and $Y$ are equivalent if there are quasicoherent sheaves $Q, Q' \in \QCoh^{\ZZ_2}(L \times_{X \times Y} M)$ such that
	\begin{equation*}
		\pi_{L,L, \ast} (\pi_{L,M}^\ast Q \otimes \pi_{M,L}^\ast Q') \simeq \cO_{L \times_{X \times Y} L}, \quad \pi_{M,M, \ast} (\pi_{M,L}^\ast Q' \otimes \pi_{L,M}^\ast Q) \simeq \cO_{M \times_{X \times Y} M},
	\end{equation*}
	where the $\pi$'s are the appropriate projections from the triple products $L \times M \times L$ and $M \times L \times M$ -- see the push-pull formula in \cite{Riva2024}. This equivalence relation is coarser than the usual equivalence of spans: if $L$ and $ M$ are equivalent spans between $X$ and $Y$ (so in particular $L \simeq M$) then we can pick $Q = Q' = i_\ast \cO_{L} \simeq i_\ast \cO_{M}$, the pushforward of the structure sheaf on the diagonal $L$ along the map $i : L \to L \times_{X \times Y} L \simeq L \times_{X \times Y} M$. Therefore the functor is well-defined, as claimed.
	
	Since symmetric monoidal functors carry dualizability data to dualizability data and since dualizable objects of a symmetric monoidal $\infty$-category are detected within its homotopy category, the result will follow if every object of $\Lag_{(\infty,1)}^0$ is dualizable. This is proven in \cite[Proposition 14.16]{Haugseng2018}, so we're done.
\end{proof}

\begin{prp} \label{prp:adjoints}
	If $f = (X \leftarrow L \rightarrow Y)$ is a Lagrangian span representing a $1$-morphism in $\sCRW$ then $f^\dagger = (Y \leftarrow L \rightarrow X)$, the same span but read backwards, is canonically both a left and right adjoint for $f$.
\end{prp}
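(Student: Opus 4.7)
The plan is to exhibit unit and counit 2-morphisms explicitly using diagonal embeddings of $L$ and to verify the zigzag identities via the push-pull calculus of \cite[Section 1.2.1]{Riva2024}. A useful preliminary observation is that the 2-morphism category $\QCoh^{\ZZ_2}(L \cap L')$ is unchanged under swapping $L$ and $L'$, so any sheaf in it represents a 2-morphism in both directions; this means the same data can serve as the unit of $f \dashv f^\dagger$ and the counit of $f^\dagger \dashv f$, so it will suffice to build a single adjunction.

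Writing $f = (X \xleftarrow{p} L \xrightarrow{q} Y)$, the composite $f^\dagger \circ f$ is the span $X \xleftarrow{p\pi_1} L \times_Y L \xrightarrow{p\pi_2} X$ and $f \circ f^\dagger$ is $Y \xleftarrow{q\pi_1} L \times_X L \xrightarrow{q\pi_2} Y$. The diagonal $\Delta_Y : L \to L \times_Y L$ has both legs projecting to $X$ via $p$, so it lifts canonically to a map $\iota : L \to X \times_{X \times X}(L \times_Y L)$ into the derived intersection that computes 2-morphisms $\id_X \Rightarrow f^\dagger \circ f$; we take $\eta := \iota_\ast \cO_L$ as the unit. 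Symmetrically, the diagonal $\Delta_X : L \to L \times_X L$ produces a counit $\veps : f \circ f^\dagger \Rightarrow \id_Y$, again supported on a copy of $L$. Verifying the triangle identities then reduces, after unpacking the push-pull formula on the iterated fiber products that appear in composites such as $f \circ f^\dagger \circ f$, to a base-change-and-projection-formula argument that collapses each whiskered composite to the identity 2-morphism on $f$ or $f^\dagger$.

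The main obstacle is the explicit bookkeeping: even for a single right adjunction one must navigate stacked fiber products of $L$ over alternating copies of $X$ and $Y$ and invoke derived base change at each stage, keeping careful track of which projections play which role. A slicker alternative — mirroring the proof of \Cref{prp:duals} — would be to lift \cite[Proposition 14.16]{Haugseng2018} to an $(\infty,2)$-categorical statement about reverse-span adjoints in a suitable $\Lag_{(\infty,2)}^0$ and transport the adjunction to $\bh_2 \sCRW$ along the forgetful functor, provided such a version is available in the literature.
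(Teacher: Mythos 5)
Your overall strategy differs from the paper's, though your preliminary reduction and your candidate (co)unit both match it. The paper likewise observes that $\Map_{\sCRW}(f,f')\simeq \QCoh^{\ZZ_2}(L\cap L')^{\simeq}\simeq \Map_{\sCRW}(f',f)$ to get the second adjunction from the first, and it identifies the unit of $f\dashv f^\dagger$ as $i_\ast\cO_L$ on the diagonal (your $X\times_{X\times X}(L\times_Y L)$ is canonically $L\times_{X\times Y}L$, so your $\eta$ is the same sheaf). But rather than checking triangle identities, the paper uses the mapping-space characterization of an adjunction: it proves $\Map_{\sCRW}(f\circ g,h)\simeq\Map_{\sCRW}(g,f^\dagger\circ h)$ naturally in $g=(W\leftarrow K\rightarrow X)$ and $h=(W\leftarrow M\rightarrow Y)$ by noting that $(K\times_X L)\times_{W\times Y}M$ and $K\times_{W\times X}(L\times_Y M)$ are both canonically the limit of one and the same diagram of spans, so their $\QCoh^{\ZZ_2}$'s agree. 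This makes the adjunction essentially formal and never touches the push-pull composition of $2$-morphisms; the explicit $\eta$ and $\veps$ are then read off afterwards by chasing $\id_f$ through the equivalence.

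The weak point of your plan is precisely the step you defer: verifying the zigzag identities. The whiskered composite $(\veps\cdot\id_f)\circ(\id_f\cdot\eta)$ is computed by the push-pull formula on a triple intersection built from stacks of the shape $L\times_Y L\times_X L$, and you must show that the resulting pushforward of structure sheaves is equivalent, as an object of $\QCoh^{\ZZ_2}(L\times_{X\times Y}L)$, to $i_\ast\cO_L$. Asserting that this "collapses by base change and the projection formula" is plausible but is exactly where the content of the proposition lives; in a blind write-up this would need to be carried out (tracking which projections are affine/quasi-compact so that base change applies, and identifying the support of the composite with the diagonal copy of $L$). Your proposed shortcut via an $(\infty,2)$-categorical upgrade of \cite[Proposition 14.16]{Haugseng2018} is not available as stated in the reference, so it cannot substitute for the verification. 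I would recommend replacing the triangle-identity computation with the paper's limit-comparison argument, which delivers the naturality in $g$ and $h$ for free.
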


\begin{proof}
	We will only show that $f$ is left adjoint to $f^\dagger$; the other claim follows since we have an equivalence of spaces
	\begin{equation*}
		\Map_{\sCRW}(f,f') \simeq \QCoh^{\ZZ_2}(L \cap L')^\simeq \simeq \QCoh^{\ZZ_2}(L' \cap L)^\simeq \simeq \Map_{\sCRW}(f',f)
	\end{equation*}
	(with $\sC^\simeq$ denoting the core, or the space of objects, of an $\infty$-category $\sC$) for any $1$-morphism $f' = (X \leftarrow L' \rightarrow Y)$, so the unit and counit maps for one adjunction can be reflected to become the counit and unit maps for the opposite adjunction.
	
	One of the equivalent definition of an adjunction $f \dashv f^\dagger$ is that there is an equivalence of mapping spaces
	\begin{equation*}
		\Map_{\sCRW}(f \circ g, h) \simeq \Map_{\sCRW}(g, f^\dagger \circ h)
	\end{equation*}
	natural in $g$ and $h$, where $g$ is a Lagrangian span $W \leftarrow K \rightarrow X$ and $h$ is a Lagrangian span $W \leftarrow M \rightarrow Y$. Indeed, there are natural equivalences
	\begin{equation*}
		\QCoh^{\ZZ_2}((K \times_X L) \times_{W \times Y} M) \simeq \QCoh^{\ZZ_2}(P) \simeq \QCoh^{\ZZ_2}(K \times_{W \times X} (L \times_Y M))
	\end{equation*}
	of $\infty$-categories (and hence of their cores) since both derived stacks $(K \times_X L) \times_{W \times Y} M$ and $K \times_{W \times X} (L \times_Y M)$ are canonically and uniquely identified with the limit $P$ of the following diagram of spans:
	\begin{equation*}
		\begin{tikzcd}
			& Y & \\
			M \ar[ur] \ar[d] & & L \ar[ul] \ar[d] \\
			W & & X \\
			& K \ar[ul] \ar[ur] &
		\end{tikzcd}
	\end{equation*}
	This concludes the proof.
\end{proof}

The equivalence
\begin{equation*}
	\Map_{\sCRW}(f \circ g, h) \simeq \Map_{\sCRW}(g, f^\dagger \circ h)
\end{equation*}
can be used to compute the (co)units of the adjunction $f \dashv f^\dagger$. To do so we set $g = \id_X$ and $h = f$: then the unit $\eta$ is obtained as the image of $\id_f$ under the equivalence, namely
\begin{equation*}
	\Map_{\sCRW}(f, f) \simeq \Map_{\sCRW}(\id_X, f^\dagger \circ f), \quad \id_f \mapsto \eta.
\end{equation*}
Since $\id_f = i_\ast \cO_{L}$, the pushforward of the structure sheaf on $L$ along the inclusion $i : L \to L \times_{X \times Y} L$ of the diagonal of $L$, we see that $\eta$ must be the same sheaf $i_\ast \cO_{L}$ over the same stack $L \times_{X \times Y} L$ but considered as a $2$-morphism $\id_X \to f^\dagger \circ f$. Similarly the counit $\veps$ is $i_\ast \cO_L$, this time seen as a $2$-morphism $f \circ f^\dagger \to \id_Y$.

\subsection{The affine part} \label{sec:affine-part}

It will be helpful to spell out the various products, compositions, and units in $\sCRW$ in the case where everything is affine. In particular we have
\begin{equation*}
	\QCoh^{\ZZ_2}(\Spec A) \simeq \Mod_A^{\ZZ_2};
\end{equation*}
see the construction of $\QCoh^{\ZZ_2}$ in \cite[Section 3.1.3]{Riva2024}. We omit the mention of the symplectic forms for simplicity, but the reader should know that every diagram below comes equipped with the extra data of those forms, together with various nullhomotopies when discussing the Lagrangian condition.

If we have two composable spans
\begin{equation*}
	\begin{tikzcd}[column sep = tiny]
		& \Spec R \ar[dl] \ar[dr] & & & \Spec R' \ar[dl] \ar[dr] \\
		\Spec A & & \Spec B & \Spec B & & \Spec C
	\end{tikzcd}
\end{equation*}
then their composite is given by the derived pullback over $\Spec B$, or equivalently by the span
\begin{equation*}
	\begin{tikzcd}[column sep = tiny]
		& \Spec (R \otimes_B R') \ar[dl] \ar[dr] & \\
		\Spec A & & \Spec C.
	\end{tikzcd}
\end{equation*}
It follows from \cite[Theorem 4.4]{Calaque2015} that this span is again Lagrangian. The unit for this composition is the diagonal span
\begin{equation*}
	\begin{tikzcd}[column sep = tiny]
		& \Spec A \ar[dl, equal] \ar[dr, equal] & \\
		\Spec A & & \Spec A.
	\end{tikzcd}
\end{equation*}
both of whose legs are the identity map. In terms of the underlying algebra map of the morphism $\Spec A \to \Spec A \times \Spec A$, this span is given by the multiplication map $A \otimes_\KK A \to A$. 

If we have two parallel Lagrangian spans
\begin{equation*}
	\begin{tikzcd}[row sep = small, column sep = tiny]
		& \Spec R \ar[dl] \ar[dr] & \\
		\Spec A & & \Spec B \\
		& \Spec S \ar[ul] \ar[ur] & 
	\end{tikzcd}
\end{equation*}
then their derived intersection is $\Spec (R \otimes_{AB} S)$, where $AB = A \otimes_\KK B$, and thus the $\infty$-category of $2$-morphisms between them is 
\begin{equation} \label{eqn:cat-of-2-morphisms}
	\QCoh^{\ZZ_2}(\Spec (R \otimes_{AB} S)) \simeq \Mod^{\ZZ_2}_{R \otimes_{AB} S}.
\end{equation}

The vertical push-pull composition is given as follows: let $X$ and $Y$ be two composable $2$-morphisms:
\begin{equation*}
	\begin{tikzcd}[column sep = small]
		& \Spec R \ar[d, "X", Rightarrow] \ar[dl] \ar[dr] & \\
		\Spec A & \Spec S \ar[d, "Y", Rightarrow] \ar[l] \ar[r] & \Spec B \\
		& \Spec T \ar[ul] \ar[ur] &
	\end{tikzcd}
\end{equation*}
This means that $X \in \Mod^{\ZZ_2}_{R \otimes_{AB} S}$ and $Y \in \Mod^{\ZZ_2}_{S \otimes_{AB} T}$. Then their composite is obtained by first passing the modules $X$ and $Y$ to the triple intersection
\begin{equation*}
	\Spec R \cap \Spec S \cap \Spec T \simeq \Spec (R \otimes_{AB} S \otimes_{AB} T)
\end{equation*}
via the functors
\begin{equation*}
	- \otimes_{R \otimes_{AB} S} (R \otimes_{AB} S \otimes_{AB} T) \quad \text{and} \quad - \otimes_{S \otimes_{AB} T} (R \otimes_{AB} S \otimes_{AB} T),
\end{equation*}
then tensoring the resulting modules together, and finally restricting along the map $R \otimes_{AB} T \to R \otimes_{AB} S \otimes_{AB} T$. After some simplifications we see that the composite is
\begin{multline*}
	(X \otimes_{R \otimes_{AB} S} (R \otimes_{AB} S \otimes_{AB} T)) \otimes_{R \otimes_{AB} S \otimes_{AB} T} (Y \otimes_{S \otimes_{AB} T} (R \otimes_{AB} S \otimes_{AB} T) \\
	\simeq (X \otimes_{{AB}} T) \otimes_{R \otimes_{AB} S \otimes_{AB} T} (Y \otimes_{AB} R) \simeq X \otimes_S Y
\end{multline*}
considered as an $(R \otimes_{AB} T)$-module. The unit for this composition is $S \in \Mod^{\ZZ_2}_{S \otimes_{{AB}} S}$ considered as an algebra over $S \otimes_{{AB}} S$ via the multiplication map.

The horizontal composition is given as follows: let $X$ and $X'$ be two $2$-morphism sharing a $0$-dimensional boundary component:
\begin{equation*}
	\begin{tikzcd}[row sep = small, column sep = small]
		& \Spec R \ar[dd, "X", Rightarrow] \ar[dl] \ar[dr] & & \Spec R' \ar[dd, "X'", Rightarrow] \ar[dl] \ar[dr] & \\
		\Spec A & & \Spec B & & \Spec C \\
		& \Spec S \ar[ul] \ar[ur] & & \Spec S' \ar[ul] \ar[ur] &
	\end{tikzcd}
\end{equation*}
Then their composite is obtained first by passing to the intersection of the pullbacks
\begin{align*}
	(\Spec R \times_{\Spec B} \Spec R') & \cap (\Spec S \times_{\Spec B} \Spec S') \\
	& \simeq \Spec (R \otimes_B R') \cap \Spec (S \otimes_B S') \\
	& \simeq \Spec ((R \otimes_B R') \otimes_{AC} (S \otimes_B S')) \\
	& \simeq \Spec ((R \otimes_{AB} S) \otimes_{B} (R' \otimes_{BC} S'))
\end{align*}
and then by tensoring. After some simplifications we see that the composite is
\begin{equation*}
	(X \otimes_B (R' \otimes_{BC} S')) \otimes_{(R \otimes_{AB} S) \otimes_{B} (R' \otimes_{BC} S')} (X' \otimes_B (R \otimes_{AB} S)) \simeq X \otimes_B X'
\end{equation*}
in the category of $((R \otimes_{AB} S) \otimes_{B} (R' \otimes_{BC} S'))$-modules. The unit for this composition is $B \in \Mod^{\ZZ_2}_{B}$.

\subsection{$2$-dimensional TFTs associated to affine symplectic derived stacks}

By \Cref{thm:2-dual-crw} we know that every symplectic derived stack $(X, \omega)$ is $2$-dualizable, i.e. fully dualizable in $\bh_2 \sCRW$. Each one of these determines a family of $2$-dimensional oriented topological field theories
\begin{equation*}
	\cZ_{(X, \omega), \tau} : \mathrm{Bord}_{0,1,2}^{\mathrm{or}} \to \bh_2 \sCRW
\end{equation*}
parametrized by the trivializations $\tau : S_{(X, \omega)} \simeq \id_{(X, \omega)}$ of the Serre automorphism, which is a $1$-morphism arising from the evaluation, coevaluation, and braiding morphisms of $(X, \omega)$. This can be seen by combining two statements:
\begin{enumerate}
	\item the oriented extended $2$-dimensional cobordism hypothesis
	\begin{equation*}
		\Fun^\otimes(\Bord_{0,1,2}^{\mathrm{or}}, \sC) \simeq (\sC^{\mathrm{fd}})^{\mathrm{SO}(2)},
	\end{equation*}
	which classifies functors out of $\Bord_{0,1,2}^{\mathrm{or}}$ as (homotopy) fixed points of the $\mathrm{SO}(2)$-action on the space of fully dualizable objects in $\sC$;
	\item the computation of \cite[Corollary 4.4]{HV2019} which provides an identification
	\begin{equation*}
		(\sC^{\mathrm{fd}})^{\mathrm{SO}(2)} \simeq \{\text{fully dualizable object $x \in \sC$ } + \text{ trivialization $\lambda_x : S_x \cong \id_x$}\}. 
	\end{equation*}
\end{enumerate}
We direct the reader to \cite{SP2009} for an extended treatment of both the unoriented and the oriented extended $2$-dimensional cobordism hypothesis, and to \cite{Pstragowski2014,Pstragowski2022} for another proof of the oriented case and a considerable simplification of the dualizability data. In particular, thanks to the work of Pstragowski we are guaranteed that several coherence identities that relate the higher adjoints of a fully dualizable object (the \emph{swallowtail identities}, for example) are automatically satisfied up to replacing some (co)unit morphisms by isomorphic ones. We also refer the reader to \cite[Section 2.3]{BCFR2023} for more details about these equations and especially for the accompanying pictures, which are very helpful when trying to understand how to relate the algebra with the cobordisms.

In this subsection we will prove that $S_{(X, \omega)}$ admits a canonical equivalence $\tau^\mathrm{can} : S_{(X, \omega)} \simeq \id_{(X, \omega)}$ which we can use as our preferred trivialization. Then we will compute the values of $\cZ_{(X, \omega), \tau^{\mathrm{can}}}$, when $X = \Spec A$ is affine, on all closed manifolds of dimension up to $2$. Note that we will denote this field theory by $\cZ_A$ since its values on closed $1$- and $2$-manifolds only depend on $A$ and not on $\omega$, as can be seen from the computations.

The process for computing the field theory with an arbitrary, non-affine $X$ is completely analogous but notationally challenging, with one having to replace the local algebras of functions with their global sheaf counterparts and the tensor products and restrictions with pullbacks and pushforwards. We encourage the reader to work out one example for themselves to see the complexity of the final expression and compare it to the affine case. Moreover, as we will see, the field theories induced from the functor that we will construct in \Cref{sec:functor} are associated to such affine stacks, which is another reason to concentrate on this case.

\subsubsection{The Serre automorphism}

Let $\mathbf{C}$ be a symmetric monoidal $2$-category, let $p \in \mathbf{C}$ be a fully dualizable object, let $b : p \otimes p \to p \otimes p$ be the braiding isomorphism of $p$, and let $e : p^\vee \otimes p \to 1$ be the evaluation morphism of $p$ with right adjoint $e^\dagger : 1 \to p^\vee \otimes p$. Then the Serre automorphism is the invertible map
\begin{equation*}
	S_p := (\id_p \otimes e) \circ (b \otimes \id_{p^\vee}) \circ (\id_p \otimes e^\dagger).
\end{equation*}
Pictorially, the Serre automorphism corresponds to a copy of the interval with a twist in the middle and its trivializations correspond to ways of tightening up the twist until it becomes a straight line -- see \Cref{fig:serre}.
\begin{figure}[ht]
	\centering
	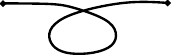
	\hspace{4em}
	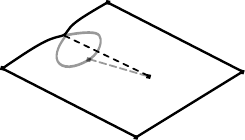
	\caption{The Serre automorphism, on the left, and a way to ``tighten the twist'' to a straight line, on the right.}
	\label{fig:serre}
\end{figure}

In our case $\mathbf{C} = \bh_2 \sCRW$ and $p = (X, \omega)$ is any symplectic derived stack, and we have:

\begin{prp} \label{prp:serre-trivial}
	The Serre automorphism in $\bh_2 \sCRW$ admits a canonical trivialization $S_{(X, \omega)} \simeq \id_{(X, \omega)}$.
\end{prp}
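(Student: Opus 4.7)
The plan is to directly compute $S_{(X,\omega)}$ as a Lagrangian span from $X$ to $X$ using the explicit duality data from \Cref{prp:duals} and \Cref{prp:adjoints}, and then exhibit a canonical equivalence with the identity span $X \xleftarrow{=} X \xrightarrow{=} X$, which by \Cref{sec:affine-part} represents $\id_{(X,\omega)}$.

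First I would unpack the three constituent 1-morphisms appearing in the defining composition. The evaluation $e$, by \Cref{prp:duals}, is the diagonal Lagrangian span $(X,-\omega)\times(X,\omega) \xleftarrow{\Delta_X} X \to \ast$, and by \Cref{prp:adjoints} its right adjoint $e^\dagger$ is represented by the same underlying span read backwards. The braiding $b$ on $(X,\omega) \otimes (X,-\omega)$ is the swap span $X \times X \xleftarrow{=} X \times X \xrightarrow{\sigma} X \times X$ with $\sigma(x,y)=(y,x)$. Tensoring each with the identity span on $(X,\omega)$ gives the three 1-morphisms explicitly.

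Next, I would compose them step by step using the pullback formula for Lagrangian spans recalled in \Cref{sec:affine-part} (generalized from the affine case by replacing tensor products with derived fiber products). Because both the evaluation and coevaluation are built from the same diagonal $\Delta_X$, the iterated derived fiber products repeatedly collapse via the canonical equivalence $X \times_{X \times X} X \simeq X$, and the swap $\sigma$ merely reshuffles the order of the factors without adding geometric content. After the dust settles, the composite is a Lagrangian span from $(X,\omega)$ to $(X,\omega)$ whose apex is equivalent to $X$ and whose two legs are both the identity of $X$ up to the canonical isomorphisms produced by the collapsing pullbacks. This yields the equivalence $\tau^{\mathrm{can}} : S_{(X,\omega)} \simeq \id_{(X,\omega)}$, and it is canonical because every identification used is a universal one (the diagonal, the swap, and universal properties of derived pullbacks), hence natural in $(X,\omega)$.

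The main obstacle I expect is the bookkeeping required to track the symplectic forms, and their sign flips under dualization and braiding, through the iterated compositions, and to verify that the nullhomotopies witnessing the Lagrangian structures on the intermediate spans glue coherently. A related subtlety is ensuring that $\tau^{\mathrm{can}}$ is coherent as a 2-morphism in $\bh_2 \sCRW$ rather than a mere isomorphism in $\bh_1 \sCRW$; this should follow from the explicit Segal-space construction of $\sCRW$, which ensures that the natural identifications used in the argument lift to the appropriate higher simplicial data.
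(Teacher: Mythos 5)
Your overall strategy -- unpack the three constituent $1$-morphisms as explicit spans using \Cref{prp:duals} and \Cref{prp:adjoints}, compose them by iterated derived fiber products, and show the result is the identity span -- is exactly the paper's approach. But the mechanism you give for the collapse is wrong, and it is the crux of the proof. You claim the iterated fiber products "collapse via the canonical equivalence $X \times_{X \times X} X \simeq X$." When both maps are the diagonal, $X \times_{X \times X} X$ is \emph{not} equivalent to $X$: it is the derived loop space, which in the affine case is $\Spec(A \otimes_{A \otimes A} A) \simeq \Spec \HC(A)$ -- precisely the (generally nontrivial) value the paper assigns to $S^1$. If the composite really reduced to that self-intersection of the diagonal, the Serre automorphism would be the loop space rather than the identity, and the proposition would be false.

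The reason the composite does collapse is different: in the span for $\id_p \otimes e^\dagger$ (and dually for $\id_p \otimes e$) the outer leg is the \emph{projection} $\pi_1 : X \times X \to X$, not a diagonal, so the fiber products that actually occur have the form $(X \times X) \times_{X \times X \times X} (X \times X)$ with one diagonal-type leg pairing against a projection-type leg. Those are transverse in the relevant sense and their limit is canonically $X$ with identity structure maps -- this is the snake/triangle-identity computation for spans, which the paper handles by writing out the full limit diagram and invoking the argument of \cite[Lemma 12.3]{Haugseng2018}. To repair your proof you need to replace the false equivalence with this computation (or a citation to it); the rest of your outline, including the identification of $e$, $e^\dagger$, and $b$ and the remarks on coherence in $\bh_2\sCRW$, is fine.
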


\begin{proof}
	The formula for the Serre automorphism applied to $p = (X, \omega)$ amounts to taking the limit of the following diagram of derived stacks:
	\begin{equation*}
		\begin{tikzcd}[column sep = small]
			& X \times X \ar[dl, "\pi_1"'] \ar[dr, "\id_X \times i"] & & X \times X \times X \ar[dl, equal] \ar[dr, "\sigma \times \id_X"] & & X \times X \ar[dl, "\id_X \times i"'] \ar[dr, "\pi_1"] & \\
			X & & X \times X \times X & & X \times X \times X & & X
		\end{tikzcd}
	\end{equation*}
	where $\pi_1$ is the projection on the first factor, $i$ is the diagonal map, and $\sigma$ is the map swapping the two factors in the product. By an argument completely analogous to that in the proof of \cite[Lemma 12.3]{Haugseng2018}, the limit of this diagram is canonically identified with $X$ and the two projection maps, under that identification, are the identity. This represents the identity span among symplectic derived stacks, so $S_{(X, \omega)} \simeq \id_{(X, \omega)}$, as needed.
\end{proof}

Now trivializations of $S_{(X, \omega)}$ are precisely the automorphisms of $\id_{(X, \omega)}$, namely invertible elements of $\QCoh^{\ZZ_2}(X \times_{X \times X} X)$. If $X = \Spec A$ is affine then 
\begin{equation*}
	\QCoh^{\ZZ_2}(X \times_{X \times X} X) \simeq \Mod^{\ZZ_2}_{A \otimes_{A \otimes A} A}
\end{equation*}
and so the trivializations correspond to invertible $A \otimes_{A \otimes A} A$-modules. One example is $A \otimes_{A \otimes A} A$ itself considered as a $A \otimes_{A \otimes A} A$-module via the multiplication map, but there might be many other non-trivial ones. From now on, however, we will concentrate on the case where our trivialization is the canonical one from \Cref{prp:serre-trivial}, and we let $\cZ_A$ denote the unique $2$-dimensional oriented field theory associated to $(\Spec A, \omega)$ with that trivialization. 

\subsubsection{The $0$- and $1$-dimensional manifolds} 

From \Cref{prp:duals} we know that $\cZ_A(\cat{pt}^-) = (X, -\omega)$. If $\cat{coev} : \emptyset \to \cat{pt}^+ \sqcup \cat{pt}^-$ and $\cat{ev} : \cat{pt}^- \sqcup \cat{pt}^+ \to \emptyset$ denote the left and right elbow bordisms (see \Cref{fig:elbows}) we get that $\cZ_A(\cat{coev})$ and $\cZ_A(\cat{ev})$ are given by reading the Lagrangian span
\begin{equation*}
	\ast \leftarrow X \xlra{i} X \times X,
\end{equation*}
where $i$ is the diagonal map, from left to right for the coevaluation and from right to left for the evaluation. 

\begin{figure}[h!]
	\centering
\begingroup%
  \makeatletter%
  \providecommand\color[2][]{%
    \errmessage{(Inkscape) Color is used for the text in Inkscape, but the package 'color.sty' is not loaded}%
    \renewcommand\color[2][]{}%
  }%
  \providecommand\transparent[1]{%
    \errmessage{(Inkscape) Transparency is used (non-zero) for the text in Inkscape, but the package 'transparent.sty' is not loaded}%
    \renewcommand\transparent[1]{}%
  }%
  \providecommand\rotatebox[2]{#2}%
  \newcommand*\fsize{\dimexpr\f@size pt\relax}%
  \newcommand*\lineheight[1]{\fontsize{\fsize}{#1\fsize}\selectfont}%
  \ifx\svgwidth\undefined%
    \setlength{\unitlength}{45.83203053bp}%
    \ifx\svgscale\undefined%
      \relax%
    \else%
      \setlength{\unitlength}{\unitlength * \real{\svgscale}}%
    \fi%
  \else%
    \setlength{\unitlength}{\svgwidth}%
  \fi%
  \global\let\svgwidth\undefined%
  \global\let\svgscale\undefined%
  \makeatother%
  \begin{picture}(1,0.81695659)%
    \lineheight{1}%
    \setlength\tabcolsep{0pt}%
    \put(0,0){\includegraphics[width=\unitlength,page=1]{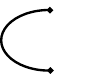}}%
    \put(0.6032344,0.66584434){\color[rgb]{0,0,0}\makebox(0,0)[lt]{\lineheight{1.25}\smash{\begin{tabular}[t]{l}$+$\end{tabular}}}}%
    \put(0.6094975,0.03461384){\color[rgb]{0,0,0}\makebox(0,0)[lt]{\lineheight{1.25}\smash{\begin{tabular}[t]{l}$-$\end{tabular}}}}%
  \end{picture}%
\endgroup%

	\hspace{4em}
\begingroup%
  \makeatletter%
  \providecommand\color[2][]{%
    \errmessage{(Inkscape) Color is used for the text in Inkscape, but the package 'color.sty' is not loaded}%
    \renewcommand\color[2][]{}%
  }%
  \providecommand\transparent[1]{%
    \errmessage{(Inkscape) Transparency is used (non-zero) for the text in Inkscape, but the package 'transparent.sty' is not loaded}%
    \renewcommand\transparent[1]{}%
  }%
  \providecommand\rotatebox[2]{#2}%
  \newcommand*\fsize{\dimexpr\f@size pt\relax}%
  \newcommand*\lineheight[1]{\fontsize{\fsize}{#1\fsize}\selectfont}%
  \ifx\svgwidth\undefined%
    \setlength{\unitlength}{36.12492299bp}%
    \ifx\svgscale\undefined%
      \relax%
    \else%
      \setlength{\unitlength}{\unitlength * \real{\svgscale}}%
    \fi%
  \else%
    \setlength{\unitlength}{\svgwidth}%
  \fi%
  \global\let\svgwidth\undefined%
  \global\let\svgscale\undefined%
  \makeatother%
  \begin{picture}(1,1.03926454)%
    \lineheight{1}%
    \setlength\tabcolsep{0pt}%
    \put(0,0){\includegraphics[width=\unitlength,page=1]{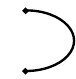}}%
    \put(-0.01883293,0.847547){\color[rgb]{0,0,0}\makebox(0,0)[lt]{\lineheight{1.25}\smash{\begin{tabular}[t]{l}$-$\end{tabular}}}}%
    \put(-0.01885568,0.04391505){\color[rgb]{0,0,0}\makebox(0,0)[lt]{\lineheight{1.25}\smash{\begin{tabular}[t]{l}$+$\end{tabular}}}}%
  \end{picture}%
\endgroup%

	\caption{$\cat{coev}$ on the left and $\cat{ev}$ on the right.}
	\label{fig:elbows}
\end{figure}

This span also presents $\cZ_A(\cat{coev})$ as the (left and right) adjoint to $\cZ_A(\cat{ev})$ (see \Cref{prp:adjoints}) meaning that up to a canonical braiding isomorphism we can replace all instances of $\cZ_A(\cat{ev}^\dagger)$ (where the symbol $\dagger$ can mean either left or right adjoint) with $\cZ_A(\cat{coev})$, and vice versa for $\cZ_A(\cat{coev})$. We can immediately calculate the value of $\cZ_A$ on the circle:
\begin{equation*}
	\cZ_A(S^1) = \cZ_A(\cat{ev}) \circ \cZ_A(\cat{coev}) \simeq X \times_{X \times X} X \simeq \Spec (A \otimes_{A \otimes A} A) \simeq \Spec \HC(A)
\end{equation*}
as a span of derived stacks between two copies of the trivial stack $\ast$. Here $\HC(A)$ denotes the Hochschild complex of $A$, which is definitionally the derived tensor product $A \otimes_{A \otimes A} A$. We make the following abbreviations for future use: $A^e := A \otimes A$ and $H := \HC(A)$, so that $H = A \otimes_{A^e} A$.

\subsubsection{The $2$-dimensional manifolds.} The values of the four basic cobordisms -- the saddle, the cosaddle, the cap, and the cocap (or cup), see \Cref{fig:2-bords} -- are either units or counits of one of the adjunctions $\cZ_A(\cat{ev}) \dashv \cZ_A(\cat{coev})$ and $\cZ_A(\cat{coev}) \dashv \cZ_A(\cat{ev})$. 

\begin{figure}[h!]
	\centering
	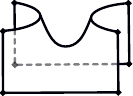
	\hspace{2em}
	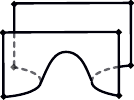
	\hspace{2em}
	\rotatebox{90}{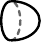}
	\hspace{2em}
	\rotatebox{90}{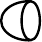}
	\caption{From left to right: $\cat{sad}$, $\cat{cosad}$, $\cat{cap}$, and $\cat{cocap}$. We omitted the $+$ and $-$ distinguishing the framing on the points since applying $\cZ_A$ nullifies the distinction due to the ambidexterity of adjoints in this $2$-category.}
	\label{fig:2-bords}
\end{figure}

More specifically:
\begin{enumerate}
	\item[($\eta$)] $\cZ_A(\cat{sad}) : \cZ_A(\cat{coev}) \circ \cZ_A(\cat{ev}) \Rightarrow \cZ_A(\id_{\cat{pt}^+ \sqcup \cat{pt}^-})$ and $\cZ_A(\cat{cap}) : \cZ_A(\emptyset) \Rightarrow \cZ_A(S^1)$ are the units of $\cZ_A(\cat{ev}) \dashv \cZ_A(\cat{coev})$ and $\cZ_A(\cat{coev}) \dashv \cZ_A(\cat{ev})$, respectively;
	\item[($\veps$)] $\cZ_A(\cat{cosad}) : \cZ_A(\id_{\cat{pt}^+ \sqcup \cat{pt}^-}) \Rightarrow \cZ_A(\cat{coev}) \circ \cZ_A(\cat{ev})$ and $\cZ_A(\cat{cocap}) : \cZ_A(S^1) \Rightarrow \cZ_A(\emptyset)$ are the counits of $\cZ_A(\cat{coev}) \dashv \cZ_A(\cat{ev})$ and $\cZ_A(\cat{ev}) \dashv \cZ_A(\cat{coev})$, respectively.
\end{enumerate}
From \Cref{prp:adjoints} and the subsequent discussion we know that these values are given by a single module (or, more generally, sheaf) with differently labelled boundaries. The module is $\cZ_A(\id_{\cat{ev}}) = A$ as an $H$-module via the canonical fold map $H \to A$. To distinguish them during compositions we will use the notation
\begin{equation*}
	A^{\cat{sad}} = \cZ_A(\cat{sad}), \, A^{\cat{cap}} = \cZ_A(\cat{cap}), \,  A^{\cat{cosad}} = \cZ_A(\cat{cosad}), \, A^{\cat{cocap}} = \cZ_A(\cat{cocap})
\end{equation*}
where each of those is the same $H$-module $A$. We will also set $A^{\cat{ev}} = \cZ_A(\id_\cat{ev})$ and $A^{\cat{coev}} = \cZ_A(\id_\cat{coev})$. Then the value of the pants, seen as a bordism $S^1 \sqcup S^1 \Rightarrow S^1$, is
\begin{equation*}
	\cZ_A(\cat{pant}) \simeq \cZ_A(\id_{\cat{ev}}) \cdot \cZ_A(\cat{sad}) \cdot \cZ_A(\id_{\cat{coev}}) \simeq A^{\cat{ev}} \otimes_{A^e} A^{\cat{sad}} \otimes_{A^e} A^{\cat{coev}}
\end{equation*}
as an $H \otimes H \otimes H$-module, and the same formula holds for the copants with $\cat{sad}$ replaced by $\cat{cosad}$. These formulas imply that the twice-punctured torus gets assigned the value value
\begin{align*}
	\cZ_A(\cat{copant}) \circ \cZ_A(\cat{pant}) & \simeq (A^{\cat{ev}} \otimes_{A^e} A^{\cat{cosad}} \otimes_{A^e} A^{\cat{coev}}) \\
	& \qquad \otimes_{H \otimes H} (A^{\cat{ev}} \otimes_{A^e} A^{\cat{sad}} \otimes_{A^e} A^{\cat{coev}}) \\
	& \simeq A \otimes_{A^e} (A \otimes_{A^e} A) \otimes_{H \otimes H} (A \otimes_{A^e} A) \otimes_{A^e} A \\
	& \simeq A \otimes_{A^e} \HC(H) \otimes_{A^e} A
\end{align*}
as an $H \otimes H$-module. We abbreviate $\HC(H) = \HC(\HC(A))$ as $H^2$. Thus the genus $g$ surface is sent to the $\KK$-module
\begin{align*}
	\cZ_A(\Sigma_g) & \simeq \cZ_A(\cat{cocap}) \circ (\cZ_A(\cat{copant}) \circ \cZ_A(\cat{pant}))^{\circ g} \circ \cZ_A(\cat{cap}) \\
	& \simeq A \otimes_{H} (A \otimes_{A^e} H^2 \otimes_{A^e} A)^{\otimes_{H} g} \otimes_{H} A,
\end{align*}
where the superscripts ``$\circ g$'' and ``$\otimes_H g$'' denotes the $g$-fold composition and tensor product, respectively, of an object with itself. This computation, together with $\cZ_A(S^1) \simeq \Spec \HC(A)$, concludes the proof of \Cref{thm:mainC}.

\subsubsection{Failure of $3$-dualizability}

We close this subsection on a negative, but expected, note. Suppose that the $2$-dimensional field theory $\cZ_A$ that we just calculated could be extended to a $3$-dimensional fully extended field theory $\conj{\cZ}$ valued in $\sCRW$, i.e. 
\begin{equation*}
	\conj{\cZ} : \Bord_{0,1,2,3}^{\mathrm{or}} \to \sCRW
\end{equation*}
is a symmetric monoidal functor of $(\infty,3)$-categories such that 
\begin{equation*}
	\bh_2 \conj{\cZ} = \conj{\cZ}\vert_{\mathrm{Bord}_{0,1,2}^{\mathrm{or}}} \simeq \cZ_A.
\end{equation*}
Then $\conj{\cZ}$ could be dimensionally reduced to a $1$-dimensional field theory
\begin{equation*}
	\conj{\cZ}^{\mathrm{red}} = \conj{\cZ}(- \times S^2) : \mathrm{Bord}^{\mathrm{or}}_{0,1} \to \End(\id_\ast) \simeq \Mod_{k}^{\ZZ_2}.
\end{equation*}
It is a classical result that such field theories are classified by the $\ZZ_2$-graded cochain complexes which are finite dimensional in both even and odd degree. This does not happen in our cases of interest:

\begin{prp} \label{prp:no-3-tft}
	If $A$ is a polynomial algebra concentrated in degree $0$ with at least $1$ generator, then $\cZ_A$ cannot be extended to a $3$-dimensional field theory.
\end{prp}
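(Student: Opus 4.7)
The plan is to argue by contradiction via the dimensional reduction set up in the preceding paragraphs. Suppose $\conj{\cZ} : \Bord_{0,1,2,3}^{\mathrm{or}} \to \sCRW$ is such an extension of $\cZ_A$. Then $\conj{\cZ}^{\mathrm{red}} = \conj{\cZ}(-\times S^2) : \Bord_{0,1}^{\mathrm{or}} \to \Mod_\KK^{\ZZ_2}$ is a $1$-dimensional oriented TFT, and by the classification recalled above its value on $\cat{pt}^+$ must be a perfect $\ZZ_2$-graded complex, i.e.\ finite-dimensional in both even and odd parities. I will show that this value has infinite-dimensional even part.

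First I would identify $\conj{\cZ}^{\mathrm{red}}(\cat{pt}^+)$ with a computation already in hand. The closed surface $S^2 = \cat{pt}^+ \times S^2$ is a $2$-morphism $\emptyset \to \emptyset$ in $\Bord_{0,1,2,3}^{\mathrm{or}}$, so the hypothesis $\bh_2 \conj{\cZ} \simeq \cZ_A$ gives
\begin{equation*}
\conj{\cZ}^{\mathrm{red}}(\cat{pt}^+) \simeq \conj{\cZ}(S^2) \simeq \cZ_A(S^2) = \cZ_A(\Sigma_0).
\end{equation*}
Plugging $g = 0$ into the formula of \Cref{thm:mainC}, where the $g$-fold tensor factor collapses to the unit $H = \HC(A)$ of the $\otimes_H$ product, yields $\cZ_A(\Sigma_0) \simeq A \otimes_H A$. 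So it suffices to show that this derived tensor product is not perfect.

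For the last step I would look only at the $0$-th cohomology. Both $A$ (placed in degree $0$) and $H$ are connective in the cohomological convention -- by HKR, $H$ sits in non-positive degrees with $H^0(H) \simeq A$ via the canonical fold map $H \to A$ -- so the standard spectral sequence for the derived tensor product gives
\begin{equation*}
H^0(A \otimes_H A) \simeq H^0(A) \otimes_{H^0(H)} H^0(A) \simeq A \otimes_A A \simeq A,
\end{equation*}
where the rightmost tensor products are underived over the discrete ring $A$. This copy of $A$ sits in the even $\ZZ_2$-parity of $\cZ_A(\Sigma_0)$, and for $A = \KK[x_1, \ldots, x_n]$ with $n \geq 1$ it is already infinite-dimensional over $\KK$. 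Hence the even part of $\cZ_A(\Sigma_0)$ is infinite-dimensional, contradicting the perfection of $\conj{\cZ}^{\mathrm{red}}(\cat{pt}^+)$ and therefore the existence of $\conj{\cZ}$. The only step requiring real care is the identification of $H^0$ of a derived tensor product with the underived tensor of the $H^0$'s; once that is in place the contradiction is immediate from $\dim_\KK A = \infty$.
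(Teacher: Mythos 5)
Your proof is correct and follows essentially the same route as the paper: reduce along $S^2$ to a $1$-dimensional theory and show that $\conj{\cZ}^{\mathrm{red}}(\cat{pt}^+) \simeq \cZ_A(S^2) \simeq A \otimes_{\HC(A)} A$ is not a finite-dimensional $\ZZ_2$-graded vector space. The only divergence is in the final step: the paper computes $A \otimes_{\HC(A)} A$ explicitly as $\KK[\undl{x},\undl{y}] \otimes_\KK \Lambda(\undl{x},\undl{y})$ with vanishing differential, whereas you extract only $H^0$ via connectivity of $A$ and $\HC(A)$ and right-exactness of the tensor product, obtaining $H^0(A) \otimes_{H^0(\HC(A))} H^0(A) \simeq A \otimes_A A \simeq A$, which is already infinite-dimensional over $\KK$ when $A$ has a generator. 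Your version is softer -- it avoids the explicit resolution at the cost of not exhibiting the full answer -- but the conclusion is the same, and the one step you flag as delicate (commuting $H^0$ with the derived tensor product of connective modules over a connective cdga) is indeed standard.
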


\begin{proof}
	A quick calculation shows that if $A = \KK[x_1, \dotsc, x_t]$ then
	\begin{align*}
		\conj{\cZ}^{\mathrm{red}}(\cat{pt}^+) = \cZ_A(S^2) & \simeq A \otimes_{\HC(A)} A \\
		& \simeq \KK[x_1, \dotsc, x_t, y_1, \dotsc, y_t] \otimes_\KK \Lambda(x_1, \dotsc, x_t, y_1, \dotsc, y_t)
	\end{align*}
	is an algebra generated by $2t$ generators in degree $0$ and $2t$ generators in degree $-1$, with no differential. (Here $\Lambda(\dotsc)$ denotes the exterior algebra on a set of generators.) So $\cZ(S^2)$ is never a finite dimensional $\ZZ_2$-graded vector space unless $t = 0$, in which case $A = \KK$ and $\Spec A = \ast$ is the unit in $\sCRW$.
\end{proof}
We will show later that the cotangent stack of $\KK^n$ is a symplectic derived stack and satisfies 
\begin{equation*}
	\bT^\ast \KK^n \simeq \Spec \KK[x_1, \dotsc, x_n, p_{x_1}, \dotsc, p_{x_n}].
\end{equation*}
Therefore, by the proposition, the field theories with target $\bT^\ast \KK^n$ for $n \geq 0$ are purely $2$-dimensional and cannot be extended further.

\subsection{Aside: cotangent stacks and affine planes} \label{sec:affine-planes}

Before moving on to the next section we need to define one of the principal objects of interest for our discussion, namely the \emph{affine plane} $\bT^\ast \KK^n$. This is the cotangent stack of the affine $n$-dimensional space $\KK^n = \Spec \KK[x_1, \dotsc, x_n]$. In this last subsection we will review some of the theory of cotangent stacks and the symplectic forms that can be put on them and then specialize to $\bT^\ast \KK^n$.

Fix a derived stack $X$ and consider its \emph{cotangent stack} $\bT^\ast X \in \dSt_{/X}$, defined up to equivalence by
\begin{equation*}
	\dSt_{/X}(Y, \bT^\ast X) \simeq \QCoh(Y)(\cO_Y, p^\ast \LL_X)
\end{equation*}
for any $p : Y \to X$. This derived stack classifies the sections of $\LL_X \to \cO_X$ or, equivalently, the $1$-forms on $X$; see for example \cite[beginning of Section 2]{Calaque2019}. Note that when $p : \Spec A \to X$ is an affine point of $X$ then
\begin{align*}
	\dSt_{/X}(\Spec A, \bT^\ast X) & \simeq \QCoh(\Spec A)(\cO_{\Spec A}, p^\ast \LL_X) \\
	& \simeq \QCoh(\Spec A)(p^\ast \TT_X, \cO_{\Spec A}) \\
	& \simeq \Mod_A(p^\ast \TT_X, A),
\end{align*}
where $\TT_X := \LL_X^\vee \in \QCoh(X)$ is the dual of the cotangent stack. When $X = \Spec B$ then we set $\TT_B := \TT_{\Spec B} \in \Mod_B$ and we have $\TT_B \simeq \LL_B^\vee$ where $\LL_B$ is the corepresenting object for the functor of $\KK$-linear derivations, $\mathrm{Der}_\KK(B, -) : \Mod_B \to \Spaces$. Moreover
\begin{align*}
	\dSt_{/\Spec B}(\Spec A, \bT^\ast (\Spec B)) & \simeq \Mod_A(A \otimes_B \TT_B, A) \\
	& \simeq \Mod_B(\TT_B, p_\ast A) \\
	& \simeq \sCAlg_{\KK, B/}(\mathrm{Sym}(\TT_B), p_\ast A) \\
	& \simeq \dSt_{/\Spec B} (\Spec A, \Spec \mathrm{Sym}(\TT_B))
\end{align*}
and so
\begin{equation*}
	\bT^\ast (\Spec B) \simeq \Spec \mathrm{Sym}(\TT_B).
\end{equation*}
If we further restrict ourselves to $\KK^n := \Spec \KK[x_1, \dotsc, x_n]$ then a simple calculation shows that
\begin{equation*}
	\TT_{\KK[x_1, \dotsc, x_n]} \simeq \LL_{\KK[x_1, \dotsc, x_n]}^\vee \simeq \KK[x_1, \dotsc, x_n]^{\oplus n}
\end{equation*}
as $\KK[x_1, \dotsc, x_n]$-modules, so
\begin{align*}
	\bT^\ast \KK^n & \simeq \Spec \mathrm{Sym}(\TT_{\KK[x_1, \dotsc, x_n]}) \\
	& \simeq \Spec \mathrm{Sym}(\KK[x_1, \dotsc, x_n]^{\oplus n}) \\
	& \simeq \Spec \KK[x_1, \dotsc, x_n, p_{x_1}, \dotsc, p_{x_n}],
\end{align*}
which in particular is equivalent to $\KK^{2n}$. It is proven in \cite[Theorem 2.2]{Calaque2019} that for any Artin stack $X$ the cotangent stack $\bT^\ast X$ carries a symplectic form $\omega_X := d \lambda_X$, where $\lambda_X$ is a tautological $1$-form on $\bT^\ast X$ coming from the identity map $\bT^\ast X \to \bT^\ast X$ under the equivalence
\begin{equation*}
	\dSt_X(\bT^\ast X, \bT^\ast X) \simeq \QCoh(\bT^\ast X)(\cO_{\bT^\ast X}, \pi^\ast \LL_X) \simeq \QCoh(\bT^\ast X)(\cO_{\bT^\ast X}, \LL_{\bT^\ast X})
\end{equation*}
(here $\pi : \bT^\ast X \to X$ is the canonical projection). Thanks to the above calculation we can concentrate on the even-dimensional affine planes $\bT^\ast \KK^n \simeq \KK^{2n}$ whose canonical symplectic form is given (globally) by
\begin{equation*}
	\omega_n \simeq d \left( \sum_{i = 1}^n x_i \wedge dp_{x_i} \right) \simeq \sum_{i = 1}^n dx_i \wedge dp_{x_i}.
\end{equation*}
Therefore $(\KK^{2n}, \omega_n) \in \sCRW$ for every $n \geq 0$. The equivalences
\begin{equation} \label{eqn:product-of-affine-planes}
	\bT^\ast \KK^m \times \bT^\ast \KK^n \simeq \KK^{2m} \times \KK^{2n} \simeq \KK^{2(m+n)} \simeq \bT^\ast \KK^{m+n}
\end{equation}
send the product form $\omega_m + \omega_n$ to $\omega_{m+n}$, i.e. they are equivalences in of symplectic stacks. Moreover, let $(-)^\diamond : \PreSymp \to \PreSymp$ denote the dualization functor which switches the sign of the symplectic form (see \Cref{prp:duals}). Then we have an equivalence
\begin{equation} \label{eqn:cotangent-complex-self-dual}
	\bT^\ast \KK^n \simeq (\bT^\ast \KK^n)^\diamond
\end{equation}
in $\sCRW$ given by switching the sign of the ``momentum'' variables: $x_i \mapsto x_i$ and $p_{x_i} \mapsto - p_{x_i}$ for all $i = 1, \dotsc, n$.

\section{Matrix factorizations} \label{sec:matrix-fact}


\subsection{The $2$-category $\bMF$} \label{sec:bMF}

Throughout this section, and in fact the rest of the paper, we abbreviate tuples of variables by underlines and the length of tuples by norms: $\undl{x} := (x_1, \dotsc, x_n)$ and $\vnorm{x} := n$. Concatenation of tuples is denoted by juxtaposition: $\undl{xy} = (x_1, \dotsc, x_n, y_1, \dotsc, y_p)$.

Fix a polynomial $V \in \KK[\undl{x}]$. A \emph{matrix factorization of $V$} is a free $\ZZ_2$-graded $\KK[\undl{x}]$-module $M$ together with an odd endomorphism $d_M$ satisfying $d_M^2 = V \cdot \id_{M}$. The $\KK[\undl{x}]$-module $\Hom_{\KK[\undl{x}]}(M,N)$ can be given a differential $\delta$ defined by
\begin{equation*}
	\delta \vphi = d_N \vphi - (-1)^{\vnorm{\vphi}} \vphi d_M,
\end{equation*}
where $\vnorm{\vphi}$ is the degree of the map $\vphi$. In particular, $\End_{\KK[\undl{x}]}(M)$ has differential $\delta = [d_M, -]$, the graded commutator with $d_M$. Matrix factorizations form a dg-category $\cMF(\undl{x};V)$.

The equation $d^2 = V \cdot \id$ has some striking consequences. Enhance the category so that every matrix factorization is equipped with a choice of basis, turning every morphism $M \to N$ into a (possibly infinite) matrix. Then in particular $d_M : M \to M$ is a matrix with polynomial entries and so we can define
\begin{equation*}
	\lambda_{x_i} := \partial_{x_i} d_M	
\end{equation*}
by differentiating each entry of $d_M$ with respect to the variable $x_i$. If the variable $x_i$ is clear from the context then we use $\lambda_i$ instead of $\lambda_{x_i}$.

\begin{lmm}[{\cite[Lemma 2.11]{Murfet2018}}] \label{lmm:derivative-of-dm}
	The odd morphism $\lambda_{i} : M \to M$ is independent of the choice of basis of $M$ up to chain homotopy.
\end{lmm}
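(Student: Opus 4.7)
The plan is to reduce the statement to a direct computation comparing the matrix of $\lambda_i$ in two bases and identifying the difference as a $\delta$-boundary in $\End_{\KK[\undl{x}]}(M)$.

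First I would fix two bases $e$ and $e'$ of $M$, related by an invertible even change-of-basis matrix $g \in GL(M)$ with entries in $\KK[\undl{x}]$, so $e' = g \cdot e$. Under this change, the differential represented in basis $e$ by the odd matrix $D$ is represented in basis $e'$ by $D' = g D g^{-1}$, and more generally any endomorphism $\vphi$ of $M$ with matrix $\Phi$ in basis $e$ has matrix $g \Phi g^{-1}$ in basis $e'$. In particular, the morphism $\lambda_i$ defined from the basis $e$ is the endomorphism with matrix $\partial_i D$ in basis $e$, which becomes $g(\partial_i D) g^{-1}$ in basis $e'$; we need to compare this with the matrix $\partial_i D'$, which is the definition of $\lambda_i$ produced from basis $e'$.

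The core of the proof is the short calculation, using the Leibniz rule and the identity $\partial_i g^{-1} = -g^{-1} (\partial_i g) g^{-1}$:
\begin{align*}
	\partial_i D' - g (\partial_i D) g^{-1}
	&= (\partial_i g) D g^{-1} + g D (\partial_i g^{-1}) \\
	&= (\partial_i g) g^{-1} \cdot g D g^{-1} - g D g^{-1} \cdot (\partial_i g) g^{-1} \\
	&= h D' - D' h,
\end{align*}
where $h := (\partial_i g) g^{-1}$ is an \emph{even} endomorphism of $M$ in basis $e'$. Since the differential on $\End_{\KK[\undl{x}]}(M)$ is $\delta = [D', -]$ up to the sign convention spelled out before the lemma, one has $\delta h = D' h - h D' = -(h D' - D' h)$ for even $h$. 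Hence $\partial_i D' - g (\partial_i D) g^{-1} = \delta(-h)$, which exhibits a chain homotopy between the two representatives of $\lambda_i$.

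The main obstacle is essentially bookkeeping: verifying the sign of the commutator against the paper's convention for $\delta$, and confirming that $h = (\partial_i g) g^{-1}$ really defines an honest basis-free even endomorphism of $M$ (it does, because $g$ and $\partial_i g$ both transform covariantly under further basis changes in a compatible way). Once these are checked, the conclusion is immediate and no subtler tools are required.
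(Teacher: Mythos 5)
Your proof is correct and is essentially the paper's argument: both differentiate the change-of-basis relation for $d_M$ via the Leibniz rule and identify the resulting discrepancy as a $\delta$-boundary. The only cosmetic difference is that you conjugate by $g^{-1}$ to work inside $\End(M)$ and name the homotopy $-(\partial_i g)g^{-1}$, whereas the paper stays in $\Hom(M_1,M_2)$ and exhibits the homotopy as $\partial_i P$ without inverting the change-of-basis matrix.
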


\begin{proof}
	Say $M_1$ and $M_2$ are two copies of $M$ equipped with two different bases, $d_1$ and $d_2$ are the matrices for $d_M$ with respect to those bases, and $P$ is a change-of-basis matrix for $d_M$: namely, $P d_1 = d_2 P$ holds. Set $\lambda_1 = \partial_{i} d_1$ and $\lambda_2 = \partial_{i} d_2$. Differentiating both sides of $P d_1 = d_2 P$ with respect to $x_i$ and using the product rule yields
	\begin{equation*}
		(\partial_{i} P) d_1 + P \lambda_1 = \lambda_2 P + d_2 (\partial_{i} P)
	\end{equation*}
	and so
	\begin{equation*}
		P \lambda_1 - \lambda_2 P = d_2 (\partial_{i} P) - (\partial_{i} P) d_1
	\end{equation*}
	as morphisms $M_1 \to M_2$. Thus any two matrix expressions for $\lambda_{i}$ are chain homotopic, as desired. 
\end{proof}

There is a symmetric monoidal $2$-category with duals, denoted $\bMF$, where matrix factorizations are the $2$-morphisms. This was constructed in \cite{BCR2023}. We recall its basic structure in preparation for the main theorem:

The objects of $\bMF$ are finite tuples of variables $\undl{x}$. The symmetric monoidal product is concatenation of tuples and the empty tuple $\emptyset$ acts as the unit. The $1$-morphisms between two tuples $\undl{x}$ and $\undl{y}$ are pairs $(\undl{a}, V)$, where $\undl{a}$ is a new collection of variables distinct from those in $\undl{xy}$ and $V \in \KK[\undl{xya}]$ is a polynomial. Horizontal composition is given by concatenating the extra variables, including the intermediary one, and summing the polynomials:
\begin{equation*}
	(\undl{b}, W(\undl{yzb})) \circ (\undl{a}, V(\undl{xya})) = (\undl{ayb}, V(\undl{xya}) + W(\undl{yzb})). 
\end{equation*}
If $(\undl{a}, V)$ and $(\undl{a'}, V')$ are $1$-morphisms $\undl{x} \to \undl{y}$, a $2$-morphism $(\undl{a}, V) \Rightarrow (\undl{a'}, V')$ is an isomorphism class of objects in the idempotent completion of $\bh \cMF(\undl{xyaa'}; V' - V)$, the homotopy category of the dg-category $\cMF(\undl{xyaa'}; V' - V)$ obtained by taking the even cohomology group of all the mapping complexes; more concretely, a $2$-morphism is a chain homotopy equivalence class $[M]$ of matrix factorizations of $V' - V$ such that one representative is a direct summand of a finite-rank matrix factorization. The vertical composition is given by the tensor product, which is compatible with homotopy equivalences: if $M : (\undl{a}, U) \Rightarrow (\undl{b}, V)$ and $N : (\undl{b}, V) \Rightarrow (\undl{c}, W)$ represent two $2$-morphisms then
\begin{equation*}
	N \circ M = M \otimes_{\KK[\undl{xyb}]} N
\end{equation*}
is a representative for the composite $2$-morphism, with the twisted differential on the right given by $d_M \otimes 1 + 1 \otimes d_N$.

The unit for the horizontal composition of $1$-morphisms is
\begin{equation*}
	\id_{\undl{x}} := (\undl{a}, \sum_{i = 1}^{\vnorm{x}} a_i(\wilde{x}_i - x_i));
\end{equation*}
the idea is that when composing a $1$-morphism with the unit we're substituting the variables $x_i$ with the variables $\wilde{x}_i$. Note that this is not a strict unit -- see for example the end of Section 2.1 in \cite{BCR2023} for an explicit unitor isomorphism. The unit for the vertical composition of $2$-morphisms is (the isomorphism class of) $I_{(\undl{a}, V)}$, the Koszul matrix factorization described in Section 2.1 of \cite{BCR2023} and later here in the following subsection: indeed, for any $2$-morphism $[M] : (\undl{a}, V) \Rightarrow (\undl{a'}, V')$ we have
\begin{equation*}
	[M] \circ [I_{(\undl{a}, V)}] = [I_{(\undl{a}, V)} \otimes_{\KK[\undl{xya}]} M] = [M] = [M \otimes_{\KK[\undl{xya'}]} I_{(\undl{a'}, V')}] = [I_{(\undl{a'}, V')}] \circ [M].
\end{equation*}
Explicit formulas for the homotopy equivalences in this equation can be found in \cite{CM2016,CM2020}.

\subsection{A useful quasi-isomorphism}

Fix a $1$-morphism $(\undl{a}, V) : \undl{x} \to \undl{y}$ in $\bMF$, with $\vnorm{\undl{a}} = k$. We need an explicit model for $I_{(\undl{a}, V)}$ as a matrix factorization of $V(\undl{xya'}) - V(\undl{xya})$ -- notice the extra tuple of variables $\undl{a'}$, which has the same length as $\undl{a}$. As a free $\KK[\undl{xyaa'}]$-module it is given by
\begin{equation*}
	I_{(\undl{a}, V)} = \KK[\undl{xyaa'}; \undl{\theta}] := \KK[\undl{xyaa'}] \otimes_\KK \Lambda (\theta_1, \dotsc, \theta_k),
\end{equation*}
where the right factor in the tensor product denotes the exterior algebra of a $\KK$-vector space with basis elements $\theta_1, \dotsc, \theta_k$. The exterior algebra inherits a $\ZZ_2$-grading where the elements $\theta_i$ are naturally in odd degree, so $I_{(\undl{a},V)}$ is a $\ZZ_2$-graded $\KK[\undl{xyaa'}]$-module. The differential is
\begin{equation*}
	d_{I_{(\undl{a}, V)}} = \sum_{i = 1}^k p_{i,V} \theta_i + (a_i' - a_i) \frac{\partial}{\partial \theta_i},
\end{equation*}
where
\begin{equation*}
	p_{i,V} := \frac{V(\undl{xy},a_1, \dotsc, a_{i-1}, a_i', \dotsc, a_k') - V(\undl{xy},a_1, \dotsc, a_{i}, a_{i+1}', \dotsc, a_k')}{a_i' - a_i} \in \KK[\undl{xyaa'}]
\end{equation*}
is the difference quotient whose limit as $a_i' \to a_i$ results in $\partial_{a_i} V$. It's easy to see that
\begin{equation*}
	d_{I_{(\undl{a},V)}}^2 = \sum_{i = 1}^k p_{i,V} \cdot (a_i' - a_i) = V(\undl{xya'}) - V(\undl{xya})
\end{equation*}
and so $I_{(\undl{a}, V)}$ is a matrix factorization of the correct polynomial.

Now we can state an important lemma. If $M$ is a matrix factorization, we denote by $\End(M)$ the $\ZZ_2$-graded cochain complex of endomorphisms of $M$.
\begin{lmm} \label{lmm:quasi-iso-end}
	There is an equivalences of algebras
	\begin{equation*}
		\End(I_{(\undl{a}, V)}) \simeq R_{(\undl{a}, V)}
	\end{equation*}
	where $R_{(\undl{a}, V)}$ denotes a cofibrant resolution of $\KK[\undl{xya}]/\langle \partial_{a_1} V, \dotsc, \partial_{a_k} V \rangle$ (a cdga concentrated in degree $0$) in the model category of $\ZZ_2$-graded commutative dgas.
\end{lmm}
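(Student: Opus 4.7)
Take as concrete cofibrant model the Koszul cdga
\begin{equation*}
	R_{(\undl{a}, V)} := \KK[\undl{xya}] \otimes_\KK \Lambda(\eta_1, \dotsc, \eta_k), \qquad \vnorm{\eta_i} = 1, \qquad d\eta_i = \partial_{a_i} V,
\end{equation*}
and aim to exhibit a quasi-isomorphism $\varphi : R_{(\undl{a}, V)} \to \End(I_{(\undl{a}, V)})$ of (associative) dg-algebras.

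First I would unpack $\End(I_{(\undl{a}, V)})$: since $I_{(\undl{a}, V)} = \KK[\undl{xyaa'}] \otimes_\KK \Lambda(\undl{\theta})$ is free of rank $2^k$, its endomorphism algebra is generated over $\KK[\undl{xyaa'}]$ by left multiplication by $\theta_i$ and contraction $\partial_{\theta_i}$, subject to the Clifford relations $\theta_i \partial_{\theta_j} + \partial_{\theta_j} \theta_i = \delta_{ij}$. Direct computation with $d_I$ yields $[d_I, \theta_i] = a_i' - a_i$ and $[d_I, \partial_{\theta_i}] = p_{i, V}$.

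To construct $\varphi$, I would correct the naive assignment $\eta_i \mapsto \partial_{\theta_i}$. Since $p_{i, V}$ restricted to $\undl{a'} = \undl{a}$ equals $\partial_{a_i} V$, we may write $p_{i, V} - \partial_{a_i} V = \sum_j (a_j' - a_j) q_{ij, V}$ for some $q_{ij, V} \in \KK[\undl{xyaa'}]$, whence the corrected contractions $\wilde{\partial}_{\theta_i} := \partial_{\theta_i} - \sum_j \theta_j q_{ij, V}$ satisfy $[d_I, \wilde{\partial}_{\theta_i}] = \partial_{a_i} V$ thanks to $[d_I, \theta_j] = a_j' - a_j$. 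Define $\varphi$ to be the identity on $\KK[\undl{xya}]$ (landing in scalar endomorphisms) and $\eta_i \mapsto \wilde{\partial}_{\theta_i}$; this is then a chain map by construction. To verify $\varphi$ is a quasi-isomorphism I would run a bicomplex spectral sequence in the $(\theta, \partial_\theta)$-bigrading: decompose $d_I$ into the Koszul piece $\sum (a_j' - a_j) \partial_{\theta_j}$ and the potential piece $\sum p_{j, V} \theta_j$, both of which square to zero and whose commutators anticommute, and filter by the Koszul piece first. Regularity of $(a_j' - a_j)$ in $\KK[\undl{xyaa'}]$ collapses $E_1$ onto $\KK[\undl{xya}] \otimes \Lambda(\partial_\theta)$ in $\theta$-degree zero; the induced differential is exactly that of $R_{(\undl{a}, V)}$; and since there is no room for higher differentials, the sequence degenerates at $E_2$ with $H^\ast(\End(I_{(\undl{a}, V)})) \cong H^\ast(R_{(\undl{a}, V)})$.

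The main subtlety is that $\End(I_{(\undl{a}, V)})$ is only an associative algebra -- the Clifford relations prevent strict supercommutativity -- so $\varphi$ is multiplicative only up to homotopy (one checks that $\wilde{\partial}_{\theta_i} \wilde{\partial}_{\theta_j} + \wilde{\partial}_{\theta_j} \wilde{\partial}_{\theta_i}$ is cohomologous but not equal to zero). The equivalence of ``algebras'' must therefore be understood in the $\infty$-category of $\ZZ_2$-graded $E_1$-algebras, equivalent in characteristic zero to $E_\infty$-algebras and hence to cdgas; from this viewpoint $\varphi$ can be promoted to an $A_\infty$-equivalence by adding higher homotopies, or one passes through a cofibrant replacement. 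Alternatively one bypasses these issues entirely by identifying $I_{(\undl{a}, V)}$ as the unit of $\cMF(\undl{xyaa'}; V(\undl{xya'}) - V(\undl{xya}))$, so that $\End(I_{(\undl{a}, V)})$ is the Hochschild cohomology of this curved category, and invoking the classical identification $\HH^\ast(\cMF(\undl{a}; V)) \simeq R_{(\undl{a}, V)}$.
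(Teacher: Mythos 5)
Your chain-level construction is sound as a map of \emph{complexes}: the identities $[d_I,\theta_i]=a_i'-a_i$ and $[d_I,\partial_{\theta_i}]=p_{i,V}$ are correct, the corrected contractions $\wilde{\partial}_{\theta_i}=\partial_{\theta_i}-\sum_j\theta_j q_{ij,V}$ do satisfy $[d_I,\wilde{\partial}_{\theta_i}]=\partial_{a_i}V$, and the two-step filtration argument recovers the cohomology computation that the paper instead imports from \cite{KR2008}. The genuine gap is in the multiplicative step. You assert that $\wilde{\partial}_{\theta_i}\wilde{\partial}_{\theta_j}+\wilde{\partial}_{\theta_j}\wilde{\partial}_{\theta_i}$ is cohomologous to zero; it is not. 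A direct computation with the Clifford relations gives $\{\wilde{\partial}_{\theta_i},\wilde{\partial}_{\theta_j}\}=-(q_{ij,V}+q_{ji,V})$, a central cocycle whose class in $H^0(\End(I_{(\undl{a},V)}))\cong\KK[\undl{xya}]/\langle\partial_{a_1}V,\dotsc,\partial_{a_k}V\rangle$ is $-\partial_{a_i}\partial_{a_j}V$, i.e.\ minus the Hessian, which is generically nonzero in the Jacobi ring. (Test case $k=1$, $V=a^3$: then $q_{11,V}=a'+2a$ and $2\wilde{\partial}_{\theta}^{\,2}=-2(a'+2a)\equiv-6a\neq 0$ in $\KK[a]/(3a^2)$.) So your $\varphi$ is not an algebra map even up to chain homotopy, and the proposed fix by a single correcting homotopy fails. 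The corrected contractions actually generate a Clifford algebra over the Jacobi ring with quadratic form the Hessian, which is exactly the obstruction one must dodge. Whether $\varphi$ extends to an $A_\infty$-morphism using the extra terms $\varphi_2(d\eta_i,\eta_j)$ is plausible but requires an argument you have not given; and the parenthetical claim that $\ZZ_2$-graded $E_1$-algebras are equivalent to $E_\infty$-algebras in characteristic zero is false (only the comparison $E_\infty\simeq$ cdga holds).

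The paper avoids this problem entirely by a zigzag rather than a direct map: it freely adjoins strictly anticommuting odd generators $\beta_i$ with $d\beta_i=\partial_{a_i}V$ to form $\End(I_{(\undl{a},V)})[\undl{\beta}]$. The inclusion $\End(I_{(\undl{a},V)})\to\End(I_{(\undl{a},V)})[\undl{\beta}]$ is a quasi-isomorphism of dgas precisely because $\partial_{a_i}V=\delta(\partial_{a_i}d_I)$ is already exact, and the map $R_{(\undl{a},V)}\to\End(I_{(\undl{a},V)})[\undl{\beta}]$, $\alpha_i\mapsto\beta_i$, is then a \emph{strict} algebra map because the $\beta_i$ genuinely anticommute. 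If you want to salvage your approach, either adopt that zigzag, or make the Hochschild-cohomology route (your last sentence) precise with a citation such as Dyckerhoff's computation of $\mathrm{HH}^\ast$ of matrix factorization categories, being careful that here $V$ lives over the parameter ring $\KK[\undl{xy}]$ and need not have isolated critical locus.
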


In particular, since $R_{(\undl{a}, V)}$ is commutative, this result shows that the natural multiplication of $\End(I_{(\undl{a}, V)})$ given by composition of endomorphisms is homotopy commutative. This is to be expected from an Eckmann-Hilton argument: since $I_{(\undl{a}, V)}$ is an identity morphism in a $2$-category its endomorphisms inherit a second multiplication which is compatible with the composition, and Eckmann-Hilton precisely ensures that the two multiplications coincide and are both commutative.

First, here is an explicit model for $R_{(\undl{a}, V)}$:
\begin{equation*}
	R_{(\undl{a}, V)} = \KK[\undl{xya}; \undl{\alpha}]
\end{equation*}
where $\vnorm{\alpha} = \vnorm{a}$ and with differential $d_{R_{(\undl{a};V)}}$ given on generators by
\begin{equation*}
	d_{R_{(\undl{a};V)}} p(\undl{xya}) = 0, \quad d_{R_{(\undl{a};V)}}\alpha_i = \partial_{a_i} V,
\end{equation*}
for any $p(\undl{xya}) \in \KK[\undl{xya}]$ and any $1 \leq i \leq k$, and then extended using the Leibniz rule.

\begin{proof}[Proof of \Cref{lmm:quasi-iso-end}]
	We will build a zigzag of equivalences
	\begin{equation} \label{eqn:zigzag}
		 \End(I_{(\undl{a}, V)}) \xlra{\simeq} \End(I_{(\undl{a}, V)})[\undl{\beta}] \xlla{\simeq} R_{(\undl{a}, V)},
	\end{equation}
	which will prove the existence of the desired equivalence $\End(I_{(\undl{a}, V)}) \simeq R_{(\undl{a}, V)}$. In the middle we have
	\begin{equation*}
		\End(I_{(\undl{a}, V)})[\undl{\beta}] := \End(I_{(\undl{a}, V)}) \otimes_{\KK} \Lambda(\beta_1, \dotsc, \beta_k),
	\end{equation*}
	where $\vnorm{\undl{a}} = k$, and we define the differential $d$ on it to be the usual differential $\delta = [d_{I_{(\undl{a},V)}}, -]$ (the commutator with Koszul signs) on $\End(I_{(\undl{a}, V)})$ and $d\beta_i = \partial_{a_i} V$ on each generator.
	
	There is an inclusion of dgas
	\begin{equation*}
		\End(I_{(\undl{a}, V)}) \to \End(I_{(\undl{a}, V)})[\undl{\beta}], \quad f \mapsto f \otimes 1
	\end{equation*}
	which is also an equivalence, i.e. an isomorphism in cohomology: the only new differentials are those coming from each $\beta_i$, but since
	\begin{equation*}
		d \beta_i = \partial_{a_i} V = [d_{I_{(\undl{a},V)}}, \partial_{a_i} d_{I_{(\undl{a},V)}}] = \delta( \partial_{a_i} d_{I_{(\undl{a},V)}} )
	\end{equation*}
	we find that they were already accounted for in the cohomology of $\End(I_{(\undl{a}, V)})$. The second equality can be obtained by differentiating the equation $(d_{I_{(\undl{a},V)}})^2 = V$ by the variable $a_i$. This yields the first equivalence in \Cref{eqn:zigzag}.
	
	The second map $t : R_{(\undl{a};V)} \to \End(I_{(\undl{a},V)})[\undl{\beta}]$ is given as follows: using the model $R_{(a, V)} = \KK[\undl{xya}; \undl{\alpha}]$ described above, $t$ acts on generators by
	\begin{equation*}
		x_i \mapsto x_i, \quad y_i \mapsto y_i, \quad a_i \mapsto \frac{a_i + a_i'}{2}, \quad \alpha_i \mapsto \beta_i.
	\end{equation*}
	The rest of the map is determined by extending linearly over $\KK$ and using the graded Leibniz rule. We compute the cohomology on both sides to show that this map is an equivalence. Since $R_{(\undl{a};V)}$ is a resolution of $\KK[\undl{xya}]/\langle \partial_{a_1} V, \dotsc, \partial_{a_k} V \rangle$ we have
	\begin{equation*}
		H^0(R_{(\undl{a};V)}) \cong \KK[\undl{xya}]/\langle \partial_{a_1} V, \dotsc, \partial_{a_k} V \rangle, \quad H^1(R_{(\undl{a};V)}) \cong 0.
	\end{equation*}
	The cohomology of $\End(M)$, for general $M$, can be found in \cite[Section 4]{KR2008}, and it agrees with the cohomology of $R_{(\undl{a};V)}$ when $M = I_{(\undl{a}, V)}$. In fact, the variables $a_i'$ disappear in cohomology because $d\theta_i = a_i' - a_i$, and the partial derivatives $\partial_{a_i} V$ get quotiented out because $d (\partial/\partial \theta_i) = p_{i, V} = \partial_{a_i} V$ after identifying $a_i'$ with $a_i$. After these identifications in cohomology the map $H^0(t)$ becomes the identity map on $\KK[\undl{xya}]$ and correctly kills each partial derivative $\partial_{a_i} V$ by sending $\alpha_i$ to $\beta_i$, which shows that it is an isomorphism. We are now done since the odd cohomologies of $R_{(\undl{a};V)}$ and $\End(I_{(\undl{a},V)})[\undl{\beta}]$ are both zero.
\end{proof}

\section{Constructing the functor} \label{sec:functor}

This last section is devoted to the construction of the functor $\fe : \bMF \to \bh_2 \sCRW$. We will do this in pieces by prescribing, for each $n$, the value of $\fe$ on $n$-cells ($n = 0,1,2$) and supplying the structure morphisms necessary for a symmetric monoidal $2$-functor.

\subsection{From variables to derived symplectic stacks} \label{sec:e-0}

For an object $\undl{x} \in \bMF$ of length $m$ we define $\fe(\undl{x}) := \bT^\ast \KK^m \in \sCRW$, which we know from \Cref{sec:affine-planes} to be equivalent to $\Spec \KK[\undl{x}, \undl{p_x}]$. There are canonical equivalences $\bT^\ast \KK^{m+n} \simeq \bT^\ast \KK^m \times \bT^\ast \KK^n$ of derived symplectic stacks (\Cref{eqn:product-of-affine-planes}), so $\fe$ canonically preserves the monoidal product. It also preserves the unit: $\fe(\emptyset) \simeq \bT^\ast \KK^0 \simeq \ast$.

\subsection{From polynomials to Lagrangian morphisms}

For a polynomial $(\undl{a}, V) : \undl{x} \to \undl{y}$ we need $\fe(\undl{a}, V)$ to be a Lagrangian span
\begin{equation*}
	\begin{tikzcd}[column sep = small, row sep = small]
		& L_{(\undl{a}, V)} \ar[dl] \ar[dr] & \\
		\bT^\ast \KK^m & & \bT^\ast \KK^n
	\end{tikzcd}
\end{equation*}
or alternatively, using \Cref{eqn:product-of-affine-planes,eqn:cotangent-complex-self-dual}, a Lagrangian morphism
\begin{equation*}
	L_{(\undl{a}, V)} \to \bT^\ast \KK^{m+n}.
\end{equation*}
Set
\begin{equation*}
	L_{(\undl{a},V)} := \Spec R_{(\undl{a},V)}
\end{equation*}
where $R_{(\undl{a},V)}$ is the cdga introduced in \Cref{lmm:quasi-iso-end} whose underlying $\KK[\undl{xya}]$-module is $\KK[\undl{xya};\undl{\alpha}]$ and whose differential is induced by $d \alpha_i = \partial_{a_i} V$.

\begin{prp} \label{prp:e-1cells}
	Define the cdga maps
	\begin{align*}
		f_{\undl{x}} : \KK[\undl{x}, \undl{p_x}] & \to R_{(\undl{a},V)}, & f_{\undl{x}}(x_i) = x_i, \quad f_{\undl{x}}(p_{x_i}) = -\partial_{x_i} V, \\
		f_{\undl{y}} : \KK[\undl{y}, \undl{p_y}] & \to R_{(\undl{a},V)}, & f_{\undl{y}}(y_i) = y_i, \quad f_{\undl{y}}(p_{y_i}) = \partial_{y_i} V.
	\end{align*}
	The induced morphism
	\begin{equation*}
		f : L_{(\undl{a},V)} \to \Spec \KK[\undl{xy}, \undl{p_x p_y}] \simeq \bT^\ast \KK^{m+n}
	\end{equation*}
	carries the structure of a Lagrangian morphism of derived stacks.
\end{prp}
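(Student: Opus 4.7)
The plan is to verify the two components of a Lagrangian morphism -- an isotropic null-homotopy of $f^\ast \omega$ in closed 2-forms and a non-degeneracy condition on the induced map of cotangent complexes -- by first recognizing $L_{(\undl{a}, V)}$ as a derived relative critical locus. Concretely, let $\pi : \KK^{m+n+k} \to \KK^{m+n}$ be the projection that forgets the $\undl{a}$-variables, and regard $V$ as a function on the total space. Then $R_{(\undl{a}, V)}$ is exactly the Koszul resolution of the derived zero locus of the vertical differential $\sum_\ell \partial_{a_\ell} V \, da_\ell \in \Gamma(\Omega^1_{\KK^{m+n+k}/\KK^{m+n}})$, so $L_{(\undl{a}, V)} = \mathrm{Crit}_\pi(V)$, and in coordinates $f$ is precisely the map sending a fiberwise critical point to its horizontal covector $dV$, up to the sign reversal on the $\bT^\ast \KK^m$-factor provided by \Cref{eqn:cotangent-complex-self-dual}.

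For the isotropic datum, I would start from the Liouville 1-form $\lambda = -\sum_i p_{x_i} \, dx_i - \sum_j p_{y_j} \, dy_j$ on $\bT^\ast \KK^{m+n}$, which by \cite[Theorem 2.2]{Calaque2019} lifts to an actual closed 2-form primitive of $\omega$. Its pullback along $f$ is
\begin{equation*}
	f^\ast \lambda = \sum_i \partial_{x_i} V \, dx_i - \sum_j \partial_{y_j} V \, dy_j,
\end{equation*}
which satisfies $d(f^\ast \lambda) = f^\ast \omega$ at the form level. Combining the internal relation $d \alpha_\ell = \partial_{a_\ell} V$ on $R_{(\undl{a}, V)}$ with the de Rham identity $dV = \sum_i \partial_{x_i} V \, dx_i + \sum_j \partial_{y_j} V \, dy_j + \sum_\ell \partial_{a_\ell} V \, da_\ell$, one exhibits the discrepancy between $f^\ast \lambda$ and $dV$ as a coboundary in the negative cyclic complex of $R_{(\undl{a}, V)}$, yielding the required null-homotopy of $f^\ast \omega$ in closed 2-forms.

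Non-degeneracy is then checked by a direct computation of cotangent complexes. The module $\LL_{R_{(\undl{a}, V)}}$ is freely generated by $dx_i, dy_j, da_\ell$ in degree $0$ and by $d\alpha_\ell$ in degree $-1$, with internal differential $d\alpha_\ell \mapsto d(\partial_{a_\ell} V)$. The pullback $f^\ast \LL_{\bT^\ast \KK^{m+n}}$ is freely generated by $dx_i, dy_j, dp_{x_i}, dp_{y_j}$ in degree $0$, with $dp_{x_i} \mapsto -d(\partial_{x_i} V)$ and $dp_{y_j} \mapsto d(\partial_{y_j} V)$. A short calculation identifies the cofiber of $f^\ast$, shifted by $[1]$, with the free module on $da_\ell$ and $d\alpha_\ell$, and matches it with the shifted dual of $\LL_{R_{(\undl{a}, V)}}$ via the pairing induced by the isotropic datum, giving the desired non-degeneracy.

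The hard part will be rigorously upgrading the form-level primitive to a genuine closed isotropic structure in the sense of \cite{PTVV2013}: one must keep track of the mixed-graded structure on de Rham forms and check that the closure data on $\lambda$, pulled back to $L_{(\undl{a}, V)}$, is nullhomotopic in a way compatible with the differentials introduced by the $\alpha_\ell$. This is fiddly but essentially mechanical bookkeeping, and both it and the non-degeneracy check can alternatively be subsumed under a general theorem asserting that a relative derived critical locus $\mathrm{Crit}_\pi(V)$ carries a canonical Lagrangian morphism to $\bT^\ast$ of the base.
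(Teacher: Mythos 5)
Your proposal is correct in outline but takes a genuinely different route from the paper. The paper never unwinds the isotropic and non-degeneracy data by hand: it observes that $\Spec R_{(\undl{a},V)}$ is the derived pullback of the two Lagrangian correspondences given by the graphs $\Gamma_0 : \KK^k \to \bT^\ast \KK^k$ and $\Gamma_V : \KK^{m+n+k} \to (\bT^\ast \KK^k)^\diamond \times \bT^\ast \KK^{m+n}$ (Lagrangian by \cite[Theorem 2.15]{Calaque2019}), and then invokes the composition theorem \cite[Theorem 4.4]{Calaque2015}; the only computation left is the identification $R_{(\undl{a},V)} \simeq \KK[\undl{a}] \otimes_{\KK[\undl{a},\undl{p_a}]} \KK[\undl{xya}]$ and the check that the induced leg agrees with $f$. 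This is precisely the ``general theorem about relative derived critical loci'' you gesture at in your final sentence, specialized and outsourced to the literature, and it buys exactly what you identify as the hard part: one never has to produce the mixed-graded closure data explicitly. Your direct verification is viable and more informative about what the isotropic homotopy actually is (namely $\pm dV$ corrected by a term built from $\sum_\ell \alpha_\ell \, da_\ell$ using $d\alpha_\ell = \partial_{a_\ell} V$), but two points need care in a full write-up. First, the passage from a strict $2$-form primitive to a closed isotropic structure must genuinely be carried out; in this affine polynomial setting it works, but it is the step the paper's citation strategy avoids entirely. Second, your cotangent-complex bookkeeping is miscounted: the cofiber of $f^\ast \LL_{\bT^\ast \KK^{m+n}} \to \LL_{L_{(\undl{a},V)}}$ is not free on $da_\ell, d\alpha_\ell$ alone, since the generators $dp_{x_i}, dp_{y_j}$ survive into the cone (they map to $\mp d(\partial_{x_i} V)$ and $\pm d(\partial_{y_j} V)$, which do not cancel them); the correct statement is that $\LL_f[-1]$ has $m+n+k$ generators in degree $0$ (from $d\alpha_\ell, dp_{x_i}, dp_{y_j}$) and $k$ in degree $1$ (from $da_\ell$), matching $\TT_{L_{(\undl{a},V)}}$ under the pairing. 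Non-degeneracy still holds, but the matching involves all of these generators rather than a $2k$-dimensional piece.
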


Before starting the proof we recall a useful construction. A polynomial $p \in \KK[\undl{u}]$ induces a map
\begin{equation*}
	\gamma_p : \KK[\undl{u}, \undl{p_u}] \to \KK[\undl{u}], \qquad \gamma_p(u_i) = u_i, \quad \gamma_p(p_{u_i}) = \partial_{u_i} p
\end{equation*}
which passes to a map of derived stacks
\begin{equation*}
	\Gamma_p : \KK^{\vnorm{\undl{u}}} \simeq \Spec \KK[\undl{u}] \to \Spec \KK[\undl{u}, \undl{p_u}] \simeq \bT^\ast \KK^{\vnorm{\undl{u}}}
\end{equation*}
called the \emph{graph of $dp$}; here $dp$ is a $1$-form on $\KK^{\vnorm{\undl{u}}}$ canonically obtained from $p$ (see \cite{Calaque2015}). By \cite[Theorem 2.15]{Calaque2019} there is a canonical Lagrangian structure on $\Gamma_p$ induced by the nullhomotopy $d(dp) \simeq 0$.

\begin{proof}[Proof of \Cref{prp:e-1cells}]
	We will write $f$ as the pullback of two Lagrangian spans, which is again Lagrangian by \cite[Theorem 4.4]{Calaque2015}. Consider the two polynomials $0 \in \KK[\undl{a}]$ and $V \in \KK[\undl{xya}]$. They induce Lagrangian morphisms
	\begin{equation*}
		\Gamma_0 : \KK^{k} \to \bT^\ast \KK^k, \quad \Gamma_V : \KK^{m+n+k} \to \bT^\ast \KK^{m+n+k} \simeq (\bT^\ast \KK^k)^\diamond \times \bT^\ast \KK^{m+n},
	\end{equation*}
	namely the graphs of $d0$ and $dV$, which we can convert to two Lagrangian spans
	\begin{equation*}
		\begin{tikzcd}
			& \KK^k \ar[dl] \ar[dr] & & \KK^{m+n+k} \ar[dr] \ar[dl] & \\
			\ast & & \bT^\ast \KK^k & & \bT^\ast \KK^{m+n}
		\end{tikzcd}
	\end{equation*}
	after flipping some signs (see \Cref{eqn:cotangent-complex-self-dual}). The derived pullback of these spans is
	\begin{equation*}
		\begin{tikzcd}
			& \Spec (\KK[\undl{a}] \otimes_{\KK[\undl{a}, \undl{p_a}]} \KK[\undl{xya}]) \ar[dl] \ar[dr, "g"] & \\
			\ast & & \bT^\ast \KK^{m+n}
		\end{tikzcd}
	\end{equation*}
	and it can be easily verified that
	\begin{equation*}
		R_{(\undl{a}, V)} \simeq \KK[\undl{a}] \otimes_{\KK[\undl{a}, \undl{p_a}]} \KK[\undl{xya}].
	\end{equation*}
	One way to see this, for example, is to replace the map $\KK[\undl{a}, \undl{p_a}] \to \KK[\undl{a}]$ by the quasi-isomorphic cofibration $\KK[\undl{a}, \undl{p_a}] \to \KK[\undl{a},\undl{p_a};\undl{\alpha}]$, where the latter cdga has differential $d\alpha_i = p_{a_i} - \partial_{a_i} V$, and then computing the ordinary tensor product of cdgas. The result is isomorphic to the presentation of $R_{(\undl{a},V)}$ given in the previous section. Moreover, $g \simeq f$ after the identification, and so we're done.
\end{proof}

Geometrically we can think of $L_{(\undl{a}, V)}$ as a (Lagrangian) family of Lagrangian graphs parametrized by the parameters $\undl{a} \in \KK^k$. Indeed, instantiating the variables $a_i$ at points $A_i \in \KK$ will return an honest Lagrangian submanifold of $\bT^\ast \KK^{m+n}$, namely the graph of the differential of the polynomial $V(\undl{xy}, A_1, \dotsc, A_k) \in \KK[\undl{xy}]$.

Now we can define $\fe(\undl{a},V)$ to be the Lagrangian span
\begin{equation*}
	\begin{tikzcd}
		& L_{(\undl{a}, V)} \ar[dl] \ar[dr] & \\
		\bT^\ast \KK^m & & \bT^\ast \KK^n
	\end{tikzcd}
\end{equation*}
described above. The following properties of $L_{(\undl{a},V)}$ are just a matter of computing some derived tensor products:
\begin{prp} \label{prp:functoriality-1cells}
	Let $(\undl{a}, V) : \undl{x} \to \undl{y}$, $(\undl{a'}, V') : \undl{x'} \to \undl{y'}$, and $(\undl{b}, W) : \undl{y} \to \undl{z}$ be $1$-morphisms in $\bMF$, and let $o = \vnorm{\undl{z}}$, $m' = \vnorm{\undl{x'}}$, $n' = \vnorm{\undl{y'}}$. 
	\begin{enumerate}
		\item $\fe$ preserves the horizontal composition and the monoidal product of $1$-morphisms, i.e. we have equivalences
		\begin{align*}
			\fe(\undl{a}, V) \times_{\bT^\ast \KK^{n}} \fe(\undl{b}, W) & \simeq \fe(\undl{ab}, V + W) & \text{as spans from $\bT^\ast \KK^m$ to $\bT^\ast \KK^o$}, \\
			\fe(\undl{a}, V) \times \fe(\undl{a'}, V') & \simeq \fe(\undl{aa'}, V + V') & \text{as spans from $\bT^\ast \KK^{m+m'}$ to $\bT^\ast \KK^{n+n'}$}.
		\end{align*}
		\item $\fe$ preserves the units for the above operations, i.e. we have equivalences
		\begin{align*}
			\fe(\id_{\undl{x}}) & \simeq \text{diagonal span from $\bT^\ast \KK^m$ to itself}, \\
			\fe(\emptyset, 0) & \simeq \text{diagonal span from $\ast$ to itself}.
		\end{align*}
	\end{enumerate}
\end{prp}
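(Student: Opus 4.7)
The plan is to reduce each equivalence to an explicit derived tensor product of cdgas. Since all stacks in sight are affine, the composition and monoidal product formulas spelled out in \Cref{sec:affine-part} tell us that horizontal composition of spans corresponds to the derived pushout of the cdgas presenting them, while the monoidal product corresponds to the $\KK$-linear tensor product and the diagonal span corresponds to the multiplication map of a cdga. Each of the four claims thus reduces to a calculation with the explicit model $R_{(\undl{a},V)} = \KK[\undl{xya}; \undl{\alpha}]$ with $d\alpha_i = \partial_{a_i}V$ from \Cref{lmm:quasi-iso-end}, together with the check that the legs defined in \Cref{prp:e-1cells} match after the resulting identifications.

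For horizontal composition I want to produce an equivalence
\begin{equation*}
	R_{(\undl{a}, V)} \otimes^L_{\KK[\undl{y}, \undl{p_y}]} R_{(\undl{b}, W)} \simeq R_{(\undl{ayb}, V+W)}.
\end{equation*}
I would compute the left-hand side by first replacing the map $\KK[\undl{y},\undl{p_y}] \to R_{(\undl{b}, W)}$, which sends $p_{y_i} \mapsto -\partial_{y_i}W$, by a standard Koszul cofibration: freely adjoin odd generators $\eta_i$ with $d\eta_i = p_{y_i} + \partial_{y_i}W$, exactly as was done in the proof of \Cref{prp:e-1cells}. After tensoring this resolution with $R_{(\undl{a},V)}$ over $\KK[\undl{y},\undl{p_y}]$ the variables $p_{y_i}$ are identified with $\partial_{y_i}V$, so the new relations become $d\eta_i = \partial_{y_i}V + \partial_{y_i}W = \partial_{y_i}(V+W)$. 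The resulting cdga has generators $\undl{x},\undl{y},\undl{z},\undl{a},\undl{b}$ together with odd generators $\undl{\alpha},\undl{\eta},\undl{\beta}$ with the expected differentials, which is precisely $R_{(\undl{ayb}, V+W)}$, reflecting that the composition rule in $\bMF$ incorporates the intermediate variables $\undl{y}$ into the extra-variable tuple $\undl{ayb}$ of the composite $1$-morphism. A direct substitution then verifies that the two legs of the composite span agree with those prescribed by \Cref{prp:e-1cells}.

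For the monoidal product the two $1$-morphisms share no variables, so $R_{(\undl{a},V)} \otimes_\KK R_{(\undl{a'},V')}$ coincides on the nose with $R_{(\undl{aa'}, V+V')}$ and the legs manifestly split as products. For the identity $\id_{\undl{x}} = (\undl{a}, \sum_i a_i(\wilde{x}_i - x_i))$, the Koszul generators $\alpha_i$ have $d\alpha_i = \wilde{x}_i - x_i$, so $R_{\id_{\undl{x}}} \simeq \KK[\undl{x},\undl{a}]$ as a cdga, and after this identification the two legs send $p_{x_i}\mapsto -\partial_{x_i}V = a_i$ and $p_{\wilde{x}_i}\mapsto \partial_{\wilde{x}_i}V = a_i$ respectively. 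Relabeling $a_i = p_{x_i}$ makes both legs the identity of $\bT^\ast \KK^m \simeq \Spec\KK[\undl{x},\undl{p_x}]$, i.e. the diagonal span. The monoidal unit case $\fe(\emptyset,0)$ is immediate since $R_{(\emptyset,0)} = \KK$ and both legs are identities of $\ast$.

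The main obstacle is not conceptual but bookkeeping: tracking sign conventions between the two legs (one carries the minus sign coming from the self-duality $\bT^\ast\KK^n \simeq (\bT^\ast\KK^n)^\diamond$ of \Cref{eqn:cotangent-complex-self-dual} that was used to convert Lagrangian morphisms into spans in \Cref{prp:e-1cells}) and correctly identifying the intermediate variables $\undl{y}$ as part of the new extra-variable tuple in the composite. The Lagrangian structures on both sides transport automatically via \cite[Theorem 4.4]{Calaque2015}, so no separate compatibility argument is required.
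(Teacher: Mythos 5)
Your proof is correct and matches the paper's approach: the paper dispatches this proposition with the one-line remark that both sides present the limit of the same diagram (prefaced by ``just a matter of computing some derived tensor products''), and your Koszul-resolution computations are exactly those tensor products carried out explicitly, with the correct identification of the intermediate variables $\undl{y}$ into the composite's extra-variable tuple $\undl{ayb}$ and the right sign bookkeeping on the two legs. No gaps.
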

These equivalences are essentially unique, since the two sides are presentations of the limit of the same diagram and two limits can be identified with each other via an essentially unique equivalence.

\subsection{From matrix factorizations to dg-modules}

The next step is to define $\fe$ on $2$-morphisms, so matrix factorizations. Recall from \Cref{eqn:cat-of-2-morphisms} that the $\infty$-category of $2$-morphisms between $\fe(\undl{a}, V) : \bT^\ast \KK^m \to \bT^\ast \KK^n$ and $\fe(\undl{b}, W) : \bT^\ast \KK^m \to \bT^\ast \KK^n$ is the $\infty$-category $\Mod^{\ZZ_2}_{A_{W - V}}$ of $\ZZ_2$-graded dg-modules over the cdga 
\begin{equation*}
	A_{W-V} := R_{(\undl{a}, V)} \otimes_{\KK[\undl{xy}, \undl{p_x p_y}]} R_{(\undl{b}, W)}.
\end{equation*}
This algebra has a nice explicit model: 
\begin{equation*}
	\KK[\undl{xyab}; \undl{\chi \upsilon \alpha \beta}]
\end{equation*}
as a $\ZZ_2$-graded exterior algebra over $\KK[\undl{xyab}]$ with one Greek variable $\chi_i$, $\upsilon_i$, $\alpha_i$, and $\beta_i$ in odd degree for each corresponding Roman variable $x_i$, $y_i$, $a_i$, and $b_i$ in even degree. The differential is given on generators by
\begin{equation*}
	d\chi_i = \partial_{x_i} W - \partial_{x_i} V, \quad d \upsilon_i = \partial_{y_i} W - \partial_{y_i} V, \quad d \alpha_i = - \partial_{a_i} V, \quad d\beta_i = \partial_{b_i} W.
\end{equation*}
More succinctly, if $\tau$ is a Greek variable corresponding to a Roman variable $t$ then $d\tau = \partial_t (W - V)$. Therefore $A_{W - V}$ is the algebra of functions on the full derived critical locus of the polynomial $W - V \in \KK[\undl{xyab}]$. Notice that we omit the extra variables $\undl{a}$ and $\undl{b}$ from the notation $A_{W - V}$ unlike with $R_{(\undl{a}, V)}$ and $R_{(\undl{b}, W)}$; this is because we don't have to keep track of which variables to differentiate with respect to, as we are including all of them.

\begin{prp} \label{prp:end-is-module}
	Let $M$ be a representative for a $2$-morphism $(\undl{a}, V) \to (\undl{b}, W)$, namely a matrix factorization of $W - V \in \KK[\undl{xyab}]$. Then the $\ZZ_2$-graded cochain complex $\End(M)$ inherits a canonical $A_{W - V}$-action, i.e. $\End(M) \in \Mod_{A_{W - V}}^{\ZZ_2}$.
\end{prp}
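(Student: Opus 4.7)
The plan is to prescribe the $A_{W-V}$-action on $\End(M)$ on the cdga generators and then verify the required compatibilities. The roman generators $x_i, y_i, a_i, b_i$ act on $\End(M)$ by scalar multiplication in $\KK[\undl{xyab}]$; this is automatic since $\End(M)$ is a $\KK[\undl{xyab}]$-dg-module and $\delta = [d_M, -]$ is polynomial-linear. The content of the proposition is in specifying the action of the odd generators $\chi_i, \upsilon_i, \alpha_i, \beta_i$.

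For each greek variable $\tau$ corresponding to a roman variable $t$, I would let $\tau$ act on $\End(M)$ by composition with the odd operator $\lambda_t := \partial_t d_M$, which \Cref{lmm:derivative-of-dm} tells us is canonically defined up to chain homotopy. The essential compatibility is that the $\delta$-image of this operator matches $d\tau = \partial_t(W-V)$ in $A_{W-V}$. This falls out of differentiating the matrix factorization equation $d_M^2 = (W-V) \id_M$ with respect to $t$:
\begin{equation*}
    d_M \lambda_t + \lambda_t d_M = \partial_t(W - V) \cdot \id_M,
\end{equation*}
which is precisely $\delta(\lambda_t) = \partial_t(W-V) \cdot \id_M$, as required.

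The main obstacle, and the heart of the argument, is that the operators $\lambda_t$ do not satisfy the exterior-algebra relations of $A_{W-V}$ on the nose. Differentiating the matrix factorization equation a second time shows, for instance, that
\begin{equation*}
    \{\lambda_s, \lambda_t\} = \partial_s \partial_t(W - V) \cdot \id_M - \delta(\partial_s \partial_t d_M),
\end{equation*}
which fails to vanish strictly, and a similar computation shows $\lambda_t^2$ need not be zero. These failures of strict (anti)commutativity are however fully controlled by iterated partial derivatives $\partial_{t_1} \cdots \partial_{t_k} d_M$, which I would assemble into the higher chain homotopies needed to upgrade the naive assignment into a coherent $A_\infty$-module structure over $A_{W-V}$; equivalently, one passes to a cofibrant resolution of $A_{W-V}$ and exhibits the action strictly there. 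Either perspective produces the canonical object of $\Mod_{A_{W-V}}^{\ZZ_2}$ asserted in the proposition, with naturality in $M$ coming from the naturality of the construction $M \mapsto (\End(M), \delta, \lambda_t)$.
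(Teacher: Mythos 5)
Your first two displayed identities are correct, but the conclusion you draw from the second one is where the argument breaks. Differentiating $d_M^2=(W-V)\,\id_M$ twice gives exactly what you wrote: $\{\lambda_s,\lambda_t\}=\partial_s\partial_t(W-V)\cdot\id_M-\delta(\partial_s\partial_t d_M)$. This says the $\lambda_t$ satisfy the \emph{Clifford} relations of the Hessian of $W-V$ up to homotopy, not the exterior relations of $A_{W-V}$. The term $\partial_s\partial_t(W-V)\cdot\id_M$ is a $\delta$-cocycle whose class in $H^0\End(M)$ is in general nonzero, so no choice of chain homotopies -- in particular not the iterated derivatives $\partial_{t_1}\dotsb\partial_{t_k}d_M$ -- can trivialize it. Concretely, take $\undl{x}=\undl{y}=\emptyset$, $V=0$, $W=b^2$, and $M=\KK[b]^{1|1}$ with $d_M=\left(\begin{smallmatrix}0&b\\ b&0\end{smallmatrix}\right)$. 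Then $\lambda_b=\left(\begin{smallmatrix}0&1\\ 1&0\end{smallmatrix}\right)$ satisfies $\lambda_b^2=\id_M$, which is the unit of $H^0\End(M)$ and hence not nullhomotopic, while every iterated derivative $\partial_b^{\geq 2}d_M$ vanishes, so all of your proposed higher homotopies are zero and the first $A_\infty$-module relation $m_2(\beta,m_2(\beta,x))\simeq m_2(\beta^2,x)=0$ fails outright. (In this example an $A_{W-V}$-action does exist -- e.g.\ $\beta$ acting by $\left(\begin{smallmatrix}0&2\\ 0&0\end{smallmatrix}\right)$ -- but it differs from $\lambda_b$ by a nontrivial odd cohomology class, so the action is genuinely not ``compose with $\partial_t d_M$.'')

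The paper avoids this problem entirely by not writing the action on generators at all: it uses the unit equivalence $\End(M)\simeq\End(I_{(\undl{a},V)}\otimes_{\KK[\undl{xya}]}M\otimes_{\KK[\undl{xyb}]}I_{(\undl{b},W)})$, lets $\End(I_{(\undl{a},V)})$ and $\End(I_{(\undl{b},W)})$ act by whiskering on the outer factors, glues the two actions over $\KK[\undl{xy},\undl{p_xp_y}]$ (using that $p_{x_i}$ acts nullhomotopically via your identity $\delta(\lambda_{x_i})=\partial_{x_i}(W-V)$, which is the one place your computation does appear), and then identifies the acting algebra with $A_{W-V}$ via \Cref{lmm:quasi-iso-end}. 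The passage from the Clifford-type operators $\lambda_t$ to an honest action of the commutative algebra $A_{W-V}$ is exactly the content of that structural detour; the paper explicitly remarks that it could not extract clean strict formulas and points to Murfet's cut operation for what they would have to look like. To salvage your approach you would need to produce those corrections (they involve more than derivatives of $d_M$, e.g.\ connection data), which is substantially harder than the argument you sketch.
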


\begin{proof}
	We have a quasi-isomorphism
	\begin{equation*}
		\End(M) \simeq \End(I_{(\undl{a}, V)} \otimes_{\KK[\undl{xya}]} M \otimes_{\KK[\undl{xyb}]} I_{(\undl{b},W)})
	\end{equation*}
	since $I_{(\undl{a}, V)}$ and $I_{(\undl{b},W)}$ are units for the composition of $2$-morphisms in $\bMF$. The right-hand side has a canonical action of $\End(I_{(\undl{a}, V)})$ from the left factor and of $\End(I_{(\undl{b},W)})$ from the right factor in the tensor product. The two actions agree on $\KK[\undl{xy}]$ and can be further made to agree on $\KK[\undl{xy}, \undl{p_x p_y}]$: the action of $p_{x_i}$ is multiplication by $\partial_{x_i}(W - V)$, which is nulhomotopic as an endomorphism of $M$ (as can be seen by differentiating the equation $d_M^2 = W - V$), and similarly for $p_{y_i}$. Thus $\End(M)$ inherits an action of
	\begin{equation*}
		\End(I_{(\undl{a}, V)}) \otimes_{\KK[\undl{xy},\undl{p_x p_y}]} \End(I_{(\undl{b},W)}) \simeq R_{(\undl{a}, V)} \otimes_{\KK[\undl{xy},\undl{p_x p_y}]} R_{(\undl{b}, W)} = A_{W - V}
	\end{equation*}
	where the first equivalence is due to \Cref{lmm:quasi-iso-end}.
\end{proof}

One might reasonably object to the usage of quasi-isomorphisms without unique inverses to define this action. This is justified by the fact that we are working in an $\infty$-category, so a module over an algebra needs to be interpreted in the correct homotopical sense, with the structure diagrams for the algebra action commuting up to coherent homotopies (in this case, coherent quasi-isomorphisms). Since $\End(-)$ and $- \otimes -$ are sufficiently functorial and in particular respect quasi-isomorphisms, this is not a problem for us: in the homotopy $2$-category $\bh_2 \sCRW$ the homotopies turn into coherent isomorphisms, and so are appropriately invertible. 

Using an explicit homotopy equivalence $M \simeq I_{(\undl{a}, V)} \otimes_{\KK[\undl{xya}]} M \otimes_{\KK[\undl{xyb}]} I_{(\undl{b},W)}$ it would be possible to rigidify the situation to obtain explicit formulas for the action of $A_{W - V}$ on $\End(M)$. Despite our efforts we couldn't find formulas that were satisfyingly clear or elucidating, so we are happy to leave this task to the interested reader. We expect, however, that they will reflect the formulas for the Clifford action described in \cite{Murfet2018}, since the cut operation presented in that paper is one way to ``wedge'' the unit $I_{(\undl{a}, V)}$ inside the composition of two matrix factorizations.

Finally, we set $\fe(M) := \End(M)$ with the $A_{W - V}$-action as in the proposition. It is clear that this assignment is independent of the chosen representative within the class $[M]$, so we omit the square brackets for simplicity. We have the following properties.
\begin{prp} \label{prp:functoriality-2cells}
	Fix objects $\undl{x}, \undl{y}, \undl{x'}, \undl{y'}, \undl{z}$ and $1$-morphisms $(\undl{a}, V), (\undl{b}, W), (\undl{c}, X) : \undl{x} \to \undl{y}$ and $(\undl{a'}, V'), (\undl{b'}, W') : \undl{y} \Rightarrow \undl{z}$ and $(\undl{a''}, V''), (\undl{b''}, W'') : \undl{x'} \Rightarrow \undl{y'}$ Let $M : (\undl{a}, V) \Rightarrow (\undl{b}, W)$, $N : (\undl{b}, V) \Rightarrow (\undl{c}, X)$, $M' : (\undl{a'}, V') \Rightarrow (\undl{b'}, W')$, and $M'' : (\undl{a''}, V'') \Rightarrow (\undl{b''}, W'')$ be representatives for $2$-morphisms in $\bMF$.
	\begin{enumerate}
		\item $\fe$ preserves the horizontal composition, the vertical composition, and the product of $2$-morphisms, i.e. we have equivalences
		\begin{align*}
			\fe(M) \otimes_{\KK[\undl{y}, \undl{p_y}]} \fe(M') & \simeq \fe(M \otimes_{\KK[\undl{y}]} M') & \text{as $A_{W + W' - (V + V')}$-modules}, \\
			\fe(M) \otimes_{R_{(\undl{b}, W)}} \fe(N) & \simeq \fe(M \otimes_{\KK[\undl{xyb}]} N) & \text{as $A_{X - V}$-modules}, \\
			\fe(M) \otimes_\KK \fe(M'') & \simeq \fe(M \otimes_{\KK} M'') & \text{as $A_{W + W'' - (V + V'')}$-modules}.
		\end{align*}
		\item $\fe$ preserves the units for the above operations, i.e. we have equivalences
		\begin{align*}
			\fe(I_{\id_{\undl{x}}}) & \simeq \KK[\undl{x}, \undl{p_x}] & \text{as $\KK[\undl{x}, \undl{p_x}] \otimes_\KK \KK[\undl{x}, \undl{p_x}]$-modules}, \\
			\fe(I_{(\undl{a}, V)}) & \simeq R_{(\undl{a}, V)} & \text{as $R_{(\undl{a}, V)} \otimes_\KK R_{(\undl{a}, V)}$-modules}, \\
			\fe(I_{(\emptyset, 0)}) & \simeq \KK & \text{as $\KK$-modules}.
		\end{align*}
	\end{enumerate}
\end{prp}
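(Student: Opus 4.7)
The plan is to verify each of the six equivalences by reducing it, via Lemma \ref{lmm:quasi-iso-end} and the unit property of the Koszul factorizations $I_{(\cdot)}$, to a standard compatibility statement between endomorphism complexes and tensor products. I would split the argument into two parts: first the three unit equivalences, then the three composition equivalences.

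For the units, the equivalence $\fe(I_{(\undl{a}, V)}) \simeq R_{(\undl{a}, V)}$ is essentially the content of Lemma \ref{lmm:quasi-iso-end}, and the bimodule structure over $R_{(\undl{a}, V)} \otimes_\KK R_{(\undl{a}, V)}$ can be read off from the two copies of variables $\undl{a}, \undl{a'}$ in the Koszul model $\KK[\undl{xyaa'}; \undl{\theta}]$. The empty case $\fe(I_{(\emptyset, 0)}) \simeq \KK$ is a trivial instance of the same lemma. For $\fe(I_{\id_{\undl{x}}}) \simeq \KK[\undl{x}, \undl{p_x}]$, I would apply Lemma \ref{lmm:quasi-iso-end} to get $\fe(I_{\id_{\undl{x}}}) \simeq R_{\id_{\undl{x}}}$ and then observe that $R_{\id_{\undl{x}}} = \KK[\undl{xya}; \undl{\alpha}]$ with $d\alpha_i = y_i - x_i$ collapses in cohomology via the Koszul cancellation of each pair $(y_i - x_i, \alpha_i)$, yielding $\KK[\undl{x}, \undl{a}]$; this matches $\KK[\undl{x}, \undl{p_x}]$ under the identification $a_i \leftrightarrow p_{x_i}$ forced by the $\KK[\undl{x}, \undl{p_x}]^{\otimes 2}$-action coming from Proposition \ref{prp:e-1cells}.

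For the three composition equivalences the strategy is uniform: insert Koszul units into the tensor products to reshape each $\End(M)$ into the form required by the $\sCRW$ composition formula, then collapse the auxiliary $\End(I_{(\cdot)})$ factors using Lemma \ref{lmm:quasi-iso-end}. For vertical composition I would write
\begin{equation*}
	\fe(M) \otimes_{R_{(\undl{b}, W)}} \fe(N) \simeq \End(M \otimes I_{(\undl{b}, W)}) \otimes_{\End(I_{(\undl{b}, W)})} \End(I_{(\undl{b}, W)} \otimes N) \simeq \End(M \otimes_{\KK[\undl{xyb}]} N)
\end{equation*}
using the homotopy equivalences $M \otimes I_{(\undl{b}, W)} \simeq M$ and $I_{(\undl{b}, W)} \otimes N \simeq N$. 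The monoidal product case reduces to a K\"unneth formula for free modules over disjoint variables, and the horizontal composition blends both features: on each $\End(M)$ the momentum variables $p_{y_i}$ act by (nullhomotopic) multiplication by $\pm \partial_{y_i}(W - V)$ as arranged in Proposition \ref{prp:end-is-module}, so derived tensoring over $\KK[\undl{y}, \undl{p_y}]$ recovers the expected $\End(M \otimes_{\KK[\undl{y}]} M')$ after the usual K\"unneth identification of endomorphisms of a tensor product of free modules.

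The main obstacle is not the quasi-isomorphisms themselves but the compatibility of the various module structures. The $A_{W-V}$-action on $\End(M)$ was defined only up to coherent homotopy through the chosen equivalence of Proposition \ref{prp:end-is-module}, so one must verify that the natural quasi-isomorphisms used above intertwine those actions coherently. As the authors note earlier, explicit rigid formulas are hard to come by, but after passing to $\bh_2 \sCRW$ the $\infty$-categorical data collapses into ordinary isomorphisms of modules, and compatibility can be checked by decomposing each quasi-isomorphism into elementary pieces (Koszul collapse and unit-insertion) whose effect on the module structures can be verified in cohomology.
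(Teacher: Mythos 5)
Your treatment of part (2) matches the paper's: the middle equivalence is exactly \Cref{lmm:quasi-iso-end}, and the other two are specializations of it (your Koszul collapse of $R_{\id_{\undl{x}}}$ along $d\alpha_i = \wilde{x}_i - x_i$ is the right computation). The discussion of how the module structures are transported is also consistent with what the paper does.

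For part (1), however, there is a genuine gap at the decisive step. Your chain
\begin{equation*}
	\End(M \otimes I_{(\undl{b}, W)}) \otimes_{\End(I_{(\undl{b}, W)})} \End(I_{(\undl{b}, W)} \otimes N) \simeq \End(M \otimes_{\KK[\undl{xyb}]} N)
\end{equation*}
is not justified by the homotopy equivalences $M \otimes I_{(\undl{b},W)} \simeq M$ and $I_{(\undl{b},W)} \otimes N \simeq N$: those only let you relabel the outer $\End$'s, they do not produce the K\"unneth-type comparison $\End(M) \otimes \End(N) \to \End(M \otimes N)$, and that comparison map is \emph{not} an equivalence for arbitrary (e.g.\ infinite-rank free) modules. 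The same issue infects your appeal to ``the usual K\"unneth identification of endomorphisms of a tensor product of free modules'' in the monoidal and horizontal cases. The missing ingredient is the dualizability of the $2$-morphisms of $\bMF$: because they are by definition direct summands of finite-rank matrix factorizations, one has the quasi-isomorphism $\End(A) \simeq A \otimes_{\KK[\undl{w}]} A^\vee$ (cf.\ \cite[Section 4]{KR2008}). The paper's proof substitutes this for each $\End$ and then simply reorders tensor factors, e.g.
\begin{equation*}
	(M \otimes_{\KK[\undl{xyab}]} M^\vee) \otimes_{R_{(\undl{b}, W)}} (N \otimes_{\KK[\undl{xybc}]} N^\vee) \simeq (M \otimes_{\KK[\undl{xyb}]} N) \otimes_{\KK[\undl{xyabc}]} (M \otimes_{\KK[\undl{xyb}]} N)^\vee,
\end{equation*}
after which the actions of $R_{(\undl{a},V)}$ and $R_{(\undl{c},X)}$ visibly commute with everything. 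You should either invoke this duality explicitly or otherwise explain why your K\"unneth map is an equivalence in this finite setting; as written, the composition equivalences are asserted rather than proved.
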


\begin{proof}
To obtain the module structures as in the right column, note that an $A_{W - V}$-module and a $A_{X - W}$-module can be tensored to obtain an $A_{X - W} \otimes_{R_{(\undl{b}, W)}} A_{W - V}$-module and the action can be further restricted to that of a $A_{X - V}$-module via the canonical composite map
\begin{align*}
	A_{X - V} & \simeq R_{(\undl{a}, V)} \otimes_{\KK[\undl{xz}, \undl{p_x p_z}]} R_{(\undl{c}, X)} \\
	& \to R_{(\undl{a}, V)} \otimes_{\KK[\undl{xy}, \undl{p_x p_y}]} R_{(\undl{b}, W)} \otimes_{\KK[\undl{yz}, \undl{p_y p_z}]} R_{(\undl{c}, X)} \\
	& \simeq (R_{(\undl{a}, V)} \otimes_{\KK[\undl{xy}, \undl{p_x p_y}]} R_{(\undl{b}, W)}) \otimes_{R_{(\undl{b}, W)}} (R_{(\undl{b}, W)} \otimes_{\KK[\undl{yz}, \undl{p_y p_z}]} R_{(\undl{c}, X)}) \\
	& \simeq A_{W - V} \otimes_{R_{(\undl{b}, W)}} A_{X - W}.
\end{align*}
This corresponds to the composition of $2$-morphisms in $\sCRW$ that was reviewed in \Cref{sec:affine-part}.

The equivalences in (2) can be verified by computation: the second one is \Cref{lmm:quasi-iso-end} and the other two follow from the second one. The equivalences in (1) require the quasi-isomorphism
\begin{equation*}
	\End(A) \simeq A \otimes_{\KK[\undl{w}]} A^\vee
\end{equation*}
which exists for cohomologically finite matrix factorizations $A$ of a polynomial $p \in \KK[\undl{w}]$. Here $M^\vee$ is (possibly a shift of) the linear dual of $M$ as a matrix factorization and the equivalence is the usual (signed) evaluation map -- see \cite[Section 4]{KR2008} for example. In particular this holds for all the $2$-morphisms in $\bMF$ by definition, see the beginning of \Cref{sec:bMF}. We can now write down the equivalences in (1) as chains of equivalences involving the matrix factorizations and their duals. The technique is the same for all three, so we only show the second one:
\begin{align*}
	\fe(M) \otimes_{R_{(\undl{b}, W)}} \fe(N) & = \End(M) \otimes_{R_{(\undl{b}, W)}} \End(N) \\
	& \simeq (M \otimes_{\KK[\undl{xyab}]} M^\vee) \otimes_{R_{(\undl{b}, W)}} (N \otimes_{\KK[\undl{xybc}]} N^\vee) \\
	& \simeq (M \otimes_{\KK[\undl{xyb}]} N) \otimes_{\KK[\undl{xyabc}]} (M \otimes_{\KK[\undl{xyb}]} N)^\vee \\
	& \simeq \End(M \otimes_{\KK[\undl{xyb}]} N) \\
	& = \fe(M \otimes_{\KK[\undl{xyb}]} N).
\end{align*}
It's straightforward to see that the actions of $R_{(\undl{a}, V)}$ and $R_{(\undl{c}, W)}$ commute with all equivalences (the first and last two are tautological, the third is a reordering), so in the end we obtain an equivalence of modules over $R_{(\undl{a}, V)} \otimes_{\KK[\undl{xz}, \undl{p_x p_z}]} R_{(\undl{c}, X)} \simeq A_{X - V}$, as needed.
\end{proof}

\subsection{The functor}

The results of the previous three subsections imply the following result:
\begin{thm} \label{thm:main}
	There is a symmetric monoidal functor
	\begin{equation*}
		\fe : \bMF \to \bh_2 \sCRW
	\end{equation*}
	from the $2$-category of matrix factorizations to the homotopy $2$-category of $\sCRW$ whose values on $k$-morphisms are given by
	\begin{align*}
		\fe(\undl{x}) & = \bT^\ast \KK^{\vnorm{\undl{x}}} & \text{on objects}, \\
		\fe(\undl{a}, V) & = \left(
		{\begin{tikzcd}[ampersand replacement = \&, column sep=tiny, row sep = small]
			\& \Spec R_{(\undl{a}, V)} \ar[dr] \ar[dl] \& \\
			\bT^\ast \KK^{\vnorm{\undl{x}}} \& \& \bT^\ast \KK^{\vnorm{\undl{y}}}
		\end{tikzcd}}
		\right)
		& \text{on $1$-morphisms $\undl{x} \to \undl{y}$}, \\
		\fe(M) & = \End(M) \text{ (as a $A_{W - V}$-module)} & \text{on $2$-morphisms $(\undl{a}, V) \Rightarrow (\undl{b}, W)$}.
	\end{align*}
\end{thm}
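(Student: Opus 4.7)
The plan is to assemble the functor $\fe$ from the data already constructed in the three preceding subsections and verify the axioms of a symmetric monoidal $2$-functor into the homotopy $2$-category $\bh_2 \sCRW$. The values on $k$-cells have already been specified: $\fe(\undl{x}) = \bT^\ast \KK^{\vnorm{\undl{x}}}$ in \Cref{sec:e-0}, $\fe(\undl{a},V) = \Spec R_{(\undl{a},V)}$ as a Lagrangian span by \Cref{prp:e-1cells}, and $\fe(M) = \End(M)$ as an $A_{W-V}$-module by \Cref{prp:end-is-module}. What remains is to exhibit the structural coherences of a symmetric monoidal $2$-functor and verify that they satisfy the required identities in the homotopy $2$-category.

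First I would collect the composition and unit coherences. The existence of (essentially unique) equivalences witnessing preservation of the horizontal composition of $1$-morphisms, the monoidal product of $1$-morphisms, and the two corresponding units is given by \Cref{prp:functoriality-1cells}. Analogously, the equivalences witnessing preservation of horizontal, vertical, and monoidal composition of $2$-morphisms, as well as the units $I_{(\undl{a},V)}$, $I_{\id_{\undl{x}}}$, and $I_{(\emptyset,0)}$, are supplied by \Cref{prp:functoriality-2cells}. Since all of these equivalences arise as limit comparisons (for $1$-cells) or as canonical quasi-isomorphisms of dg-modules built from the $\End(A)\simeq A\otimes A^\vee$ identification (for $2$-cells), they are determined uniquely up to contractible choice, hence descend to honest isomorphisms in $\bh_2\sCRW$.

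Next I would verify the coherence axioms. A symmetric monoidal $2$-functor between symmetric monoidal $2$-categories is required to satisfy associator, unitor, and braiding compatibility diagrams, together with the interchange law between horizontal and vertical composition of $2$-cells. In the homotopy $2$-category these reduce to equalities of isomorphisms between identical iterated derived tensor products. The strategy is uniform: each side of each coherence diagram presents, after expanding the definitions of composition in $\sCRW$ recalled in \Cref{sec:affine-part}, the same iterated (derived) tensor product of copies of the $R_{(\undl{a},V)}$'s and of the $\End(M)$'s. Two limits (or colimits) of the same diagram are canonically identified, so all coherence diagrams commute essentially automatically at the level of $\bh_2\sCRW$. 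The compatibility of $\fe$ with the symmetry isomorphism is similarly reduced to the observation that the braiding on $\bT^\ast\KK^m\times\bT^\ast\KK^n$ induces precisely the identification $R\otimes_\KK R' \simeq R'\otimes_\KK R$ of exterior algebras over $\KK[\undl{xyab}]$, which is how the symmetry of $\bMF$ is defined on the nose.

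The main obstacle I anticipate is not the verification of any individual coherence but the bookkeeping required to show that the equivalences produced in \Cref{prp:functoriality-1cells} and \Cref{prp:functoriality-2cells} are natural in all variables and compatible across the two layers. In particular one must check that the $A$-module equivalences coming from $\End(A)\simeq A\otimes A^\vee$, used in the vertical composition comparison for $2$-cells, interact correctly with the Lagrangian span comparisons for $1$-cells, so that the interchange law holds in $\bh_2\sCRW$. This follows once one observes that on both sides the resulting module is tautologically the endomorphism complex of the same iterated tensor product of matrix factorizations, viewed over the same tensor product of the algebras $R_{(\undl{a},V)}$. Assembling these observations yields the desired symmetric monoidal $2$-functor $\fe : \bMF \to \bh_2\sCRW$ with the stated values on $k$-cells.
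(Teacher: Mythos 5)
Your proposal is correct and follows essentially the same route as the paper: both assemble $\fe$ from the data of \Cref{sec:e-0}, \Cref{prp:e-1cells}, \Cref{prp:end-is-module}, \Cref{prp:functoriality-1cells}, and \Cref{prp:functoriality-2cells}, and both dispose of the coherence axioms by observing that the structure equivalences are canonical identifications of the same iterated pullbacks and derived tensor products, hence automatically satisfy the unit, associativity, and symmetry identities in $\bh_2\sCRW$. Your write-up is somewhat more explicit about the braiding compatibility and the interchange law, but this is elaboration of the same argument rather than a different method.
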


\begin{proof}
	The equivalences described in \Cref{sec:e-0}, \Cref{prp:functoriality-1cells}, and \Cref{prp:functoriality-2cells} are all the structure isomorphisms required for the assignment $\fe$ in the claim to be a functor of symmetric monoidal $2$-categories. Since these equivalences reduce to the structure equivalences for the pullback of stacks and the tensor product of modules, which are appropriately unital and associative, they satisfy all the necessary unit, associativity, and coherence identities in $\bh_2 \sCRW$ as well.
\end{proof}

\end{document}